\theoremstyle{definition} \newtheorem{rem}{Remark}
\theoremstyle{definition} 
\theoremstyle{definition} \newtheorem{thm}{Theorem}
\theoremstyle{definition} 
\theoremstyle{definition} \newtheorem{lem}{Lemma}
\theoremstyle{assumption} \newtheorem{ass}{Assumption}
\theoremstyle{definition} 
\theoremstyle{definition} 
\newcommand{\R}{\mathbb{R}}
\newcommand{\N}{\mathbf{N}}
\newcommand{\T}{\ensuremath{\textit{T}}}
\newcommand{\define}{:=}
\newcommand{\ve}[1]{\bm{#1}}
\DeclareMathOperator*{\col}{col}
\DeclareMathOperator*{\diag}{diag}
\newif\if@restonecol
\def\BibTeX{{\rm B\kern-.05em{\sc i\kern-.025em b}\kern-.08em
    T\kern-.1667em\lower.7ex\hbox{E}\kern-.125emX}}
\begin{document}
\title{Linear Convergence of Distributed Aggregative Optimization with Coupled Inequality Constraints}
\author{Kaixin Du and Min Meng
\thanks{This work was partially supported by Shanghai Pujiang Program under Grant 21PJ1413100, the National Natural Science Foundation of China under Grant 62103305, 61733018 and 62088101, and Shanghai Municipal Science and Technology Major Project under grant 2021SHZDZX0100 ({\em Corresponding author: Min Meng}).}
\thanks{K. Du is with the Shanghai Research Institute for Intelligent Autonomous Systems, Tongji University, Shanghai 200092, China. (e-mail: dukx@tongji.edu.cn).}
\thanks{M. Meng  is with the Department of Control Science and Engineering, College of Electronics and Information Engineering, and Shanghai Research Institute for Intelligent Autonomous Systems, Tongji University, Shanghai, China. (e-mail: mengmin@tongji.edu.cn).}
}

\maketitle

\begin{abstract}
This article investigates a distributed aggregative optimization problem subject to coupled affine inequality constraints, in which local objective  functions depend not only on their own decision variables but also on an aggregation of all the agents' variables. To our best knowledge, this work is the first to address this problem, and a novel distributed aggregative primal-dual algorithm is proposed based on the dual diffusion strategy and gradient tracking technique. Through rigorous analysis, it is shown that the devised algorithm converges to the optimal solution at a linear rate. Finally, a numerical example is conducted to illustrate the effectiveness of the theoretical results.
\end{abstract}

\begin{IEEEkeywords}
Distributed aggregative optimization, coupled affine inequality constraints, primal-dual algorithm, linear convergence rate.
\end{IEEEkeywords}

\section{Introduction}
\label{sec:introduction}
In recent years, a novel distributed optimization framework called distributed aggregative optimization has gained considerable attention due to its wide applications in different fields, such as optimal placement \cite{kansal2013optimal}, network congestion control \cite{barrera2014dynamic}, and network cost minimization problem \cite{cao2021distributed}. In this framework, a group of agents in a network cooperate to minimize the sum of local objective functions, which depend not only on their own decision variables but also on an aggregative variable consisting of the sum of functions of all agents' local decision variables.

The distributed aggregative optimization was originally proposed by Li \emph{et al.} \cite{Li2022Aggregative}. Subsequently, an online version with time-varying objective functions was studied in \cite{Li2022DistributedOnline}. The authors in \cite{Chen&LiangDistributed} attempted to consider communication with finite bites combined with quantization scheme. \cite{Carnevale2023nonconvex} investigated a distributed feedback optimization law for aggregative optimization problems in a nonconvex scenario. More recently, a distributed projection-free algorithm based on the Franke-Wolfe update in \cite{Wang2022Distributed} was designed to handle the feasible set. Considering heavy ball and Nesterov's accelerated methods, two algorithms called DAGT-HB and DAGT-NES in \cite{liu2023acc} were proposed to accelerate the convergence rate of distributed aggregative optimization. Recall that the works mentioned above considered either a simple set constraint or without any constraints. In addition, although stemming from distributed aggregative games \cite{Koshal2016Distributed,Liang2017Distributednash,gadjov2018passivity,yi2019operator}, it is noteworthy that in the aggregative optimization, agents aim to cooperatively find the optimal solution rather than to seek a Nash equilibrium (NE) or generalized NE (GNE) in a noncooperative manner.

Note that in many distributed scenarios in reality, the decisions of agents need to satisfy some global resource constraints \cite{Xu2019Dual,camisa2021distributed,falsone2017dual}, such as communication channel capacity and total energy \cite{heydaribeni2019distributed}, which could be modeled as coupled equality or inequality constraints. Therefore, it is highly significant to investigate distributed optimization with coupled constraints including equality or inequality constraints. A continuous primal-dual dynamics in \cite{guo2022exponential} was proposed which can be applied to distributed scenarios with coupled affine equality constraints, and theoretically shown to possess an exponential convergence. For discrete settings, in \cite{li2020distributed}, a distributed proximal primal-dual algorithm based on the proximal point method was studied for distributed optimization with a feasible set constraint and coupled inequality constraints, and rigorously proven to converge at a sublinear rate $O(1/\sqrt{k})$, where $k$ is the iteration number. Afterwards, a decentralized proximal primal-dual algorithm was devised in \cite{alghunaim2021dual} for minimizing the sum of local functions plus a coupled composite function, which was proven to have a linear convergence rate. In addition, it deserves to be mentioned that there are a number of other different algorithms for distributed optimization with coupled equality or inequality constraints, see \cite{Chang2015Multi-Agent,le2017neurodynamic,Li2023Proximal}, to just name a few. A recent study \cite{car2022distributed} introduced  an online projected aggregative tracking algorithm to handle time-varying objective functions, aggregation rules and inequality constraints. Furthermore, the objective error was proven to converge at a linear rate in the static case. Treated as a cooperative formulation for multi-agent decision problems, authors in \cite{huang2021primal} addressed cooperative aggregative games with set constraints as well as coupled affine inequality constraints, and developed a primal decomposition approach based on the Douglas-Rachford splitting method for finding GNE. Even though the convergence of the proposed algorithm was established, the linear convergence has not been established..

It can be carefully observed that the distributed aggregative optimization problem with coupled constraints has not yet been considered. Inspired by the practical applications, this article investigates distributed aggregative optimization subject to coupled affine inequality constraints, in which each local objective function hinges on not only an associated local decision variable but also an aggregative variable. The latter is obtained by summing functions of local decision variables, and thus is possibly not available to each agent in the distributed setup. In fact, each agent is only able to access partial information about its own decision variable, cost and constraint functions. Hence, local information exchanges through a communication network between neighboring agents is required to reach the global optimality. Then a novel primal-dual algorithm, where each agent is equipped with a global dual variable along with the related local estimates and the trackers are utilized to handle the gradient average, as well as the aggregation variable, is devised to deal with the formulated problem in this paper and is also elaborately proven to converge linearly. The main contributions of this article can be summarized as follows:
\begin{itemize}
  \item [i)] To the best of our knowledge, this work is the first to investigate distributed aggregative optimization subject to coupled inequality constraints. The proposed distributed aggregative primal-dual algorithm makes a slight but critical modification to the typical primal-dual algorithm and draws into a positive semi-definite matrix associated with the communication topology, and meanwhile adheres to the dual diffusion strategy and gradient tracking technique.
  \item [ii)] Under mild assumptions, the global linear convergence is established and an explicit upper bound on the rate of convergence is provided. Additionally, the conditions that stepsizes are required to satisfy are also given, which are related with the communication structure, the affine inequality, the strong convexity and smoothness parameters of the objective function.
\end{itemize}

The rest of this article is organized as follows. Section \ref{sec2} formulates the problem. Section \ref{sec3} presents the developed algorithm and convergence results. The performance of the designed algorithm is illustrated in Section \ref{sec4}. Finally, Section \ref{sec5} makes a brief conclusion and future vision.

\emph{Notations:} Denote by $\R^n$, $\R^n_+$ and $\R^{m\times n}$ $n$-dimensional real vector space, $n$-dimensional vector set with every entry nonnegative, and the set of $m\times n$ real matrices, respectively. $I_N\in\R^{N\times N}$ denotes the identity matrix of size $N$. $\ve{1}_N\in\R^N$ is a vector with all entries equal to one, and $\ve{0}$ is a  vector of compatible dimension with all entries being $0$. $[n]$ represents the index set $\{1,\ldots,n\}$ for an integer $n>0$. $\diag\{A_1,\ldots,A_n\}$ stands for a block diagonal matrix with diagonal blocks $A_i$, $i\in[n]$.  $A^{\T}$ is denoted as the transpose of matrix $A$. Let $\col\{x_1,\ldots,x_N\}:=(x_1^{\T},\ldots,x_N^{\T})^{\T}$. $\otimes$ is the Kronecker product. For a square matrix $A$, denote by $\lambda_{\min}(A)$ and $\lambda_{\max}(A)$ the smallest and largest eigenvalues, respectively. $\underline{\sigma}(A)$ denotes the smallest nonzero singular value of any matrix $A$. Let $\Pi_X[x]={\arg\min}_{y\in X}\|y-x\|^2$ be the projection of a vector $x\in\R^n$ onto a closed convex set $X\subset\R^n$ and $\N_X(x)=\{s:s^{\T}(y-x)\leq 0,\ \forall y\in X$\} be the normal cone of $x\in X$. For a positive semi-definite $Q\in\R^{n\times n}$ and $x\in\R^n$, let $\|x\|_Q^2:=x^{\T}Qx$. For a function $g:\R^d\to\R^n$ with component functions $g_1,\ldots,g_n$, denote $\nabla g(x)\define [\nabla g_1(x),\ldots,\nabla g_n(x)]$.

\section{PROBLEM FORMULATION}\label{sec2}

 Consider a network of $N$ agents. The goal is to solve the following aggregative optimization problem subject to coupled inequality constraints:
\begin{equation}\label{prob}
\begin{aligned}
  &\min_{\ve{x}\in\R^d}~f(\ve{x})=\sum_{i=1}^N f_i(x_i,\phi(\ve{x}))\\
  &\text{s.t.}~\sum_{i=1}^N A_ix_i\leq\sum_{i=1}^Nb_i,
\end{aligned}
\end{equation}
where $\ve{x}=\col\{x_1,\ldots,x_N\}$ is the global decision variable with $x_i\in\R^{d_i}$ being the local decision variable. The function $f_i:\R^{d_i}\times\R^n\to\R$, the matrix $A_i\in\R^{m\times d_i}$ and $b_i\in \R^m$ are privately known to agent $i$, where $d=\sum_{i=1}^Nd_i$, while the aggregation function $\phi(\ve{x})$ has the form
\begin{equation}\label{aggr func}
  \phi(\ve{x}):=\frac{\sum_{i=1}^N h_i(x_i)}{N},
\end{equation}
in which $h_i:\R^{d_i}\to\R^n$ modeling the contribution of the corresponding decision $x_i$ to $\phi(\ve{x})$ is only accessible to agent $i$.

To proceed, for brevity, for any $i\in [N]$, denote $\nabla_1f_i(x_i,z_i):=\frac{\partial f_i(x_i,z_i)}{\partial x_i}$ and $\nabla_2f_i(x_i,z_i):=\frac{\partial f_i(x_i,z_i)}{\partial z_i}$. For $\ve{x}=\col\{x_1,\ldots,x_N\}\in\R^d$ and $\ve{z}=\col\{z_1,\ldots,z_N\}\in\R^{Nn}$, define $\nabla_1 f(\ve{x},\ve{z}):=\col\{\nabla_1 f_1(x_1,z_1)\ldots,\nabla_1 f_N(x_N,z_N)\}$, $\nabla_2 f(\ve{x},\ve{z}):=\col\{\nabla_2 f_1(x_1,z_1)\ldots,\nabla_2 f_N(x_N,z_N)\}$. Moreover, let $h(\ve{x}):=\col\{h_1(x_1),\ldots,h_N(x_N)\}$, $A:=[A_1,\ldots,A_N]$, $b:=\sum_{i=1}^Nb_i$.

For problem (\ref{prob}), introduce the Lagrange function
\begin{align}\label{fun:lagrange}
  L(\ve{x},\lambda)=f(\ve{x})+\lambda^{\T}(A\ve{x}-b),
\end{align}
where $\lambda\in\R^{m}$ is the corresponding Lagrange multiplier. It is known that if $\ve{x}^*=\col\{x_1^*,\ldots,x_N^*\}$ is the optimal solution to problem (\ref{prob}), there exists $\lambda^*$ such that $(\ve{x}^*,\lambda^*)$ is the Karush-Kuhn-Tucker (KKT) point of this problem satisfying
\begin{align}
  &\nabla_1f_i(x_i^*,\phi(\ve{x}^*))+\frac{\nabla h_i(x_i^*)}{N}\sum_{j=1}^N\nabla_2f_j(x_j^*,\phi(\ve{x}^*))\notag\\
  &+A_i^{\T}\lambda^*=\ve{0},\ \forall i\in[N],\label{KKT-x}\\
  &A\ve{x}^*-b\in\N_{\R_+^m}(\lambda^*)\label{KKT-dual}.
\end{align}

Next, we postulate some standard assumptions to be used for solving problem (\ref{prob}).
\begin{ass}\label{ass:function} The following hold for problem (\ref{prob}):
\begin{itemize}
  \item [i)] The global cost function $f$ is differentiable and $\nu$-strongly convex, i.e., for all $\ve{x},\ve{x}'\in\R^d$,
   \begin{align*}
    (\nabla f(\ve{x})-\nabla f(\ve{x}'))^{\T}(\ve{x}-\ve{x})&\geq\nu\|\ve{x}-\ve{x}'\|^2.
   \end{align*}
   \item [ii)] Define $\nabla h(\ve{x})\define\diag\{\nabla h_1(x_1),\ldots, \nabla h_N(x_N)\}$.  $\nabla_1f(\ve{x},\ve{z})+\nabla h(\ve{x})(\ve{1}_N\otimes\frac{1}{N} \sum_{i=1}^N\nabla_2f_i(x_i,z_i))$ is $L_1$- Lipschitz, i.e., for all $\ve{x},\ve{x}'\in\R^d$ and $\ve{z},\ve{z}'\in\R^{Nn}$,
   \begin{align*}\label{ass:Lip}
     &\Big\|\nabla_1f(\ve{x},\ve{z})+\nabla h(\ve{x})\Big(\ve{1}_N\otimes\frac{1}{N} \sum_{i=1}^N\nabla_2f_i(x_i,z_i)\Big)\notag\\
     &-\nabla_1f(\ve{x}',\ve{z}')-\nabla h(\ve{x}')\Big(\ve{1}_N\otimes\frac{1}{N} \sum_{i=1}^N\nabla_2f_i(x_i',z_i')\Big)\Big\|\notag\\
     \leq& L_1(\|\ve{x}-\ve{x'}\|+\|\ve{z}-\ve{z'}\|).
     \end{align*}
  \item [iii)] $\nabla_2f(\ve{x},\ve{z})$ is $L_2$-Lipschitz continuous, i.e., for all $\ve{x},\ve{x}'\in\R^d$ and $\ve{z},\ve{z}'\in\R^{Nn}$,
  \begin{equation*}
    \|\nabla_2f(\ve{x},\ve{z})-\nabla_2f(\ve{x'},\ve{z'})\|\leq L_2(\|\ve{x}-\ve{x'}\|+\|\ve{z}-\ve{z'}\|).
  \end{equation*}
  \item [iv)] For any $i\in [N]$, $h_i$ is differentiable and $\nabla h_i$ is bounded, i.e., there exists a constant $L_3>0$ such that $\|\nabla h_i(\cdot)\|\leq L_3$.
  \item [v)] For any $i\in[N]$, $A_i$ has full row rank.
\end{itemize}
\end{ass}

\begin{rem}
It is worth noting that while the global objective function $f$ is assumed to be strongly convex, each local objective function $f_i$, $i\in[N]$ may not be strongly convex or even convex. In fact, the strong convexity of $f$ is typically required to guarantee linear convergence in distributed algorithms with fixed step-sizes. Assumption \ref{ass:function}-ii) and \ref{ass:function}-iii) are commonly used in the analysis of distributed aggregative optimization \cite{Li2022DistributedOnline,Chen&LiangDistributed}. Assumption \ref{ass:function}-iv) implies $\nabla h_i$ is $L_3$-Lipschitz. Under Assumption \ref{ass:function}-v), $A_iA_i^{\T}$ is positive definite. Assumption \ref{ass:function}-v) is common and essential to design linearly convergent algorithms when affine coupled constraints exist \cite{alghunaim2020proximal,meng2022linear}.
\end{rem}

The agents communicate with each other over a network described by an undirected graph $\mathcal{G}=(\mathcal{V},\mathcal{E},\mathcal{W})$, where $\mathcal{V}=[N]$ is the node set with node $i$ corresponding to the $i$-th agent, $\mathcal{E}\subset\mathcal{V}\times\mathcal{V}$ is the edge set, and $\mathcal{W}=[w_{ij}]\in\mathbb{R}^{N\times N}$ is the adjacency matrix of $\mathcal{G}$. Here, $w_{ij}>0$ if $(j,i)\in\mathcal{E}$, i.e., agent $i$ and $j$ are able to communicate with each other directly, and $w_{ij}=0$, otherwise.

\begin{ass}\label{ass:network} The graph $\mathcal{G}$ is undirected and connected. The matrix $\mathcal{W}$ is symmetric and doubly stochastic, i.e., $\mathcal{W}=\mathcal{W}^{\T}$ and $\mathcal{W}\ve{1}_N=\ve{1}_N$.
\end{ass}

Under Assumption \ref{ass:network}, the matrix $\mathcal{W}$ is primitive. Hence, it holds from the Perron-Frobenius theorem \cite{horn2012MA} that $\mathcal{W}$ has a simple eigenvalue $1$ with all the other eigenvalues belonging to $(-1,1)$. For any $\ve{u}\in\R^{Nm}$, $(I_{Nm}-\mathcal{W}\otimes I_m)\ve{u}=\ve{0}$ if and only if $\ve{u}=\ve{1}_N\otimes u$ for some $u\in\R^m$. Define $B^2$ as follows:
$$B^2:=\frac{1}{2}(I_N-\mathcal{W}),$$
then $B^2$ is symmetric and positive semi-definite with eigenvalues in $[0,1)$, and it holds that $(B\otimes I_m)\ve{u}=\ve{0}$ if and only if $\ve{u}=\ve{1}_N\otimes u$ for some $u\in\R^m$. Moreover, it can be observed that the matrix $B^2$ reveals the network structure, which is of utmost importance for introducing distributed algorithms to solve the formulated problem, while $B$ may not.

\begin{lem}\cite{horn2012MA}\label{lem:network}
Suppose Assumption \ref{ass:network} is satisfied, then
\begin{itemize}
  \item [i)] $(\frac{1}{N}\ve{1}_N\ve{1}_N^{\T}\otimes I_n)(\mathcal{W}\otimes I_n)=(\mathcal{W}\otimes I_n)(\frac{1}{N}\ve{1}_N\ve{1}_N^{\T}\otimes I_n)=\frac{1}{N}\ve{1}_N\ve{1}_N^{\T}\otimes I_n$.
  \item [ii)] $\rho:=\|\mathcal{W}-\frac{1}{N}\ve{1}_N\ve{1}_N^{\T}\|< 1$.
  \item [iii)] $\|\mathcal{W}-I_N\|\leq 2$.
  \item [iv)] For $\ve{z}\in\R^{Nn}$, $\|(\mathcal{W}\otimes I_n)\ve{z}-(\frac{1}{N}\ve{1}_N\ve{1}_N^{\T}\otimes I_n)\ve{z}\|\leq\rho\|\ve{z}-(\frac{1}{N}\ve{1}_N\ve{1}_N^{\T}\otimes I_n)\ve{z}\|$.
\end{itemize}
\end{lem}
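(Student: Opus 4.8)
The plan is to handle the four items separately, drawing on only three ingredients: the mixed-product rule for Kronecker products, the double stochasticity of $\mathcal{W}$, and the spectral theory of symmetric matrices together with the Perron--Frobenius fact already recorded before the lemma; items ii)--iv) then build on i).

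First, for i) I would apply the identity $(A\otimes B)(C\otimes D)=(AC)\otimes(BD)$ to obtain $(\frac{1}{N}\ve{1}_N\ve{1}_N^{\T}\otimes I_n)(\mathcal{W}\otimes I_n)=\big(\frac{1}{N}\ve{1}_N\ve{1}_N^{\T}\mathcal{W}\big)\otimes I_n$ and $(\mathcal{W}\otimes I_n)(\frac{1}{N}\ve{1}_N\ve{1}_N^{\T}\otimes I_n)=\big(\frac{1}{N}\mathcal{W}\ve{1}_N\ve{1}_N^{\T}\big)\otimes I_n$; since $\mathcal{W}$ is doubly stochastic, $\ve{1}_N^{\T}\mathcal{W}=\ve{1}_N^{\T}$ and $\mathcal{W}\ve{1}_N=\ve{1}_N$, so both expressions collapse to $\frac{1}{N}\ve{1}_N\ve{1}_N^{\T}\otimes I_n$, which is i).

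Next, for ii) and iii) I would use that $\mathcal{W}=\mathcal{W}^{\T}$ admits an orthonormal eigenbasis in which, by Assumption \ref{ass:network} and Perron--Frobenius, the eigenvalue $1$ is simple with eigenvector $\ve{1}_N/\sqrt{N}$ and the remaining eigenvalues $\lambda_2,\ldots,\lambda_N$ lie in $(-1,1)$. Since $\frac{1}{N}\ve{1}_N\ve{1}_N^{\T}$ is the orthogonal projector onto $\mathrm{span}(\ve{1}_N)$, the symmetric matrix $\mathcal{W}-\frac{1}{N}\ve{1}_N\ve{1}_N^{\T}$ has eigenvalue $0$ along $\ve{1}_N$ and eigenvalues $\lambda_2,\ldots,\lambda_N$ on the orthogonal complement, hence $\rho=\|\mathcal{W}-\frac{1}{N}\ve{1}_N\ve{1}_N^{\T}\|=\max_{2\leq i\leq N}|\lambda_i|<1$, giving ii). For iii), the eigenvalues of $\mathcal{W}-I_N$ are $\lambda_i-1\in(-2,0]$, so $\|\mathcal{W}-I_N\|=\max_i|\lambda_i-1|<2$; alternatively, $\|\mathcal{W}-I_N\|\leq\|\mathcal{W}\|+\|I_N\|=2$ because the symmetric stochastic matrix $\mathcal{W}$ has $\|\mathcal{W}\|=1$.

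Finally, for iv) set $P:=\frac{1}{N}\ve{1}_N\ve{1}_N^{\T}\otimes I_n$; it is idempotent and, by i), $(\mathcal{W}\otimes I_n)P=P$. Hence $(\mathcal{W}\otimes I_n)\ve{z}-P\ve{z}=(\mathcal{W}\otimes I_n)(\ve{z}-P\ve{z})=\big((\mathcal{W}-\tfrac{1}{N}\ve{1}_N\ve{1}_N^{\T})\otimes I_n\big)(\ve{z}-P\ve{z})$, where the last identity is once more the mixed-product rule combined with i); taking norms and using $\|(\mathcal{W}-\frac{1}{N}\ve{1}_N\ve{1}_N^{\T})\otimes I_n\|=\|\mathcal{W}-\frac{1}{N}\ve{1}_N\ve{1}_N^{\T}\|=\rho$ from ii) yields the stated contraction. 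None of these steps is a genuine obstacle; the only place demanding a little care is recognizing in iv) that the average-projection structure lets one replace $\mathcal{W}\otimes I_n$ by the contraction $(\mathcal{W}-\frac{1}{N}\ve{1}_N\ve{1}_N^{\T})\otimes I_n$ on the range of $I-P$.
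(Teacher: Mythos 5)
Your argument is correct. The paper itself gives no proof of this lemma --- it is simply cited from Horn and Johnson --- so there is nothing to compare against beyond noting that your self-contained verification (mixed-product rule plus double stochasticity for i), the spectral decomposition of the symmetric matrix $\mathcal{W}$ together with the Perron--Frobenius fact recorded before the lemma for ii) and iii), and the projector identity $(\mathcal{W}\otimes I_n)P=P$ with $P^2=P$ for iv)) is the standard one and is sound.
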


\section{MAIN RESULTS}\label{sec3}

In this section, we first propose a novel distributed aggregative primal-dual algorithm to find the optimal solution for the coupled constrained aggregative optimization problem (\ref{prob}) and then show the linear convergence result.

At iteration $k$, each agent $i$, $i\in[N]$ is equipped with variables $x_{i,k}$ and $\lambda_{i,k}$ to estimate the optimal solution and global optimal dual variable, respectively, while $v_{i,k}$ and $y_{i,k}$ are the auxiliary variables. $z_{i,k}$ is exploited to track the average $\phi(\ve{x}_k)$ and $\mu_{i,k}$ is leveraged to track the average gradient $\frac{1}{N}\sum_{i=1}^n\nabla_2f_i(x_{i,k},\phi(\ve{x}_k))$. Define $\Lambda:=\diag\{A_1,\ldots,A_N\}$, $\ve{b}:=\col\{b_1,\ldots,b_N\}$ and $\mathcal{B}:=B\otimes I_m$. By combining an augmented Lagrange method with the distributed gradient tracking scheme \cite{pu2018}, for the time being, we obtain a distributed aggregative primal-dual algorithm to handle problem (\ref{prob}) as follows:
\begin{subequations}\label{alg:vec}
  \begin{align}
  \ve{x}_{k+1}=&\ve{x}_k-\alpha(\nabla_1f(\ve{x}_k,\ve{z}_k)+\nabla h(\ve{x}_k)\ve{\mu}_k+\Lambda^{\T}\ve{\lambda}_k),\label{alg:vec-1}\\
  \ve{z}_{k+1}=&(\mathcal{W}\otimes I_n)\ve{z}_k+h(\ve{x}_{k+1})-h(\ve{x}_k),\label{alg:vec-2}\\
  \ve{\mu}_{k+1}=&(\mathcal{W}\otimes I_n)\ve{\mu}_k+\nabla_2f(\ve{x}_{k+1},\ve{z}_{k+1})-\nabla_2f(\ve{x}_k,\ve{z}_k),\label{alg:vec-3}\\
  \ve{v}_{k+1}=&\ve{\lambda}_k-\mathcal{B}^2\ve{\lambda}_k+\beta(\Lambda \ve{x}_{k+1}-\ve{b})+\mathcal{B}\ve{y}_k,\label{alg:vec-4}\\
  \ve{y}_{k+1}=&\ve{y}_k-\gamma\mathcal{B}\ve{v}_{k+1},\label{alg:vec-5}\\
  \ve{\lambda}_{k+1}=&\Pi_{\R^{Nm}_+}[\ve{v}_{k+1}],\label{alg:vec-6}
\end{align}
\end{subequations}
where $\alpha$, $\beta$, $\gamma>0$ are stepsizes, which will be specified in the convergence analysis later on. Different from the conventional primal-dual method, the auxiliary variable $\ve{v}_k$ instead of the dual variable $\ve{\lambda}_k$ is utilized in the dual update $(\ref{alg:vec-5})$. In addition, the addition term $-\mathcal{B}^2\ve{\lambda}_k$ in $(\ref{alg:vec-4})$ appears by adding $-\frac{1}{2}\ve{\lambda}^{\T}\mathcal{B}^2\ve{\lambda}$ to the Lagrange function (\ref{fun:lagrange}) due to $\mathcal{B}\ve{\lambda}^*=\ve{0}$. The two slight differences increase the complexity of the algorithm, which is rather significant to establish linear convergence. Iteration $(\ref{alg:vec-6})$ uses projection onto $\R^{Nm}_+$ to deal with the inequality constraints.

It can be seen that algorithm (\ref{alg:vec}) cannot be directly implemented in a fully distributed manner due to matrix $\mathcal{B}$ is involved. Taking $\mathcal{B}^2$ into account, next, we transfer (\ref{alg:vec}) into a fully distributed and equivalent form. To this end, multiplying (\ref{alg:vec-5}) by $\mathcal{B}$ and substituting it into (\ref{alg:vec-4}), it gives that
\begin{align*}
  \ve{v}_{k+1}=&(I_{Nm}-\gamma\mathcal{B}^2)\ve{v}_k-(I_{Nm}-\mathcal{B}^2)(\ve{\lambda}_k-\ve{\lambda}_{k-1})\notag\\
  &+\beta\Lambda(\ve{x}_{k+1}-\ve{x}_k).
\end{align*}
Therefore, iteration (\ref{alg:vec}) can be reformulated into a fully distributed manner as Algorithm \ref{alg:primaldual}.

\begin{algorithm}[!hpt]\caption{Distributed Aggregative Primal-Dual Algorithm}\label{alg:primaldual}
At iteration $k$, each agent $i$, $i\in[N]$ maintains its own variables $x_{i,k}\in\R^{d_i}$, $z_{i,k}\in\R^n$, $\mu_{i,k}\in\R^n$, $v_{i,k}\in\R^m$ and $\lambda_{i,k}\in\R^m$. Let $C=[c_{ij}]_{N\times N}:=B^2=\frac{I_N-\mathcal{W}}{2}$.

 {\bf Initialization:} For any $i\in[N]$, initialize $x_{i,0}$ and $\lambda_{i,0}$ arbitrarily, and set $z_{i,0}=h_i(x_{i,0})$, $\mu_{i,0}=\nabla_2f_i(x_{i,0},z_{i,0})$,  $v_{i,0}=\ve{0}_m$ and $\lambda_{i,-1}=\ve{0}_m$.

{\bf Iteration:} For $k\geq 0$, each agent $i$ performs the following updates:
\begin{subequations}\label{alg:dist}
  \begin{align}
  x_{i,k+1}=&x_{i,k}-\alpha(\nabla_1f_i(x_{i,k},z_{i,k})+\nabla h_i(x_{i,k})\mu_{i,k}\notag\\
  &+A_i^{\T}\lambda_{i,k}),\label{alg:dist-1}\\
  z_{i,k+1}=&\sum_{j=1}^Nw_{ij}z_{j,k}+h_i(x_{i,k+1})-h_i(x_{i,k}),\label{alg:dist-2}\\
 \mu_{i,k+1}=&\sum_{j=1}^Nw_{ij}\mu_{j,k}+\nabla_2f_i(x_{i,k+1},z_{i,k+1})\notag\\
 &-\nabla_2f_i(x_{i,k},z_{i,k}),\label{alg:dist-3}\\
  v_{i,k+1}=&v_{i,k}-\gamma\sum_{j=1}^Nc_{ij}v_{j,k}-\sum_{j=1}^Nc_{ij}(\lambda_{j,k}-\lambda_{j,k-1})\notag\\
  &+ \lambda_{i,k}-\lambda_{i,k-1}+\beta A_i(x_{i,k+1}-x_{i,k}),\label{alg:dist-4}\\
 \lambda_{i,k+1}=&\Pi_{\R^m_+}[v_{i,k+1}],\label{alg:dist-5}
\end{align}
where $\alpha$, $\beta$, $\gamma>0$ are stepsizes to be determined.
\end{subequations}
\end{algorithm}

\begin{rem}
The presence of the aggregative structure in each local objective function makes the problem studied in this article distinctive from that of \cite{alghunaim2021dual,meng2022linear}, and thus there is a substantial difference in theoretical analysis owing to the gradient tracking processes utilized to deal with the aggregation variable and gradient average.
\end{rem}

From now on, we focus on establishing the linear convergence of algorithm (\ref{alg:vec}) which is equivalent to Algorithm \ref{alg:primaldual}. Let us start by presenting some useful lemmas. First, Lemma \ref{lem:average} gives some observations on  variables $\ve{z}_k$ and $\ve{\mu}_k$.
\begin{lem}\label{lem:average}
  Under Assumption \ref{ass:network}, the following equalities hold:
  \begin{align}
    \bar{z}_k&:=\frac{1}{N}\sum_{i=1}^Nz_{i,k}=\frac{1}{N}\sum_{i=1}^Nh_i(x_{i,k})=\phi(\ve{x}_k),\label{lem:average-1}\\
    \bar{\mu}_k&:=\frac{1}{N}\sum_{i=1}^N\mu_{i,k}=\frac{1}{N}\sum_{i=1}^N\nabla_2f_i(x_{i,k},z_{i,k}).\label{lem:average-2}
  \end{align}
\end{lem}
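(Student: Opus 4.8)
The plan is to prove the two identities by induction on $k$, exploiting the double stochasticity of $\mathcal{W}$ (Assumption \ref{ass:network}) together with the structure of the gradient-tracking updates \eqref{alg:vec-2} and \eqref{alg:vec-3} and the initialization $z_{i,0}=h_i(x_{i,0})$, $\mu_{i,0}=\nabla_2 f_i(x_{i,0},z_{i,0})$ prescribed in Algorithm \ref{alg:primaldual}. It is convenient to work with the averaging operator $\tfrac{1}{N}(\ve{1}_N^{\T}\otimes I_n)$ applied to the stacked vectors $\ve{z}_k$ and $\ve{\mu}_k$; note $\tfrac{1}{N}(\ve{1}_N^{\T}\otimes I_n)\ve{z}_k=\bar z_k$ and similarly for $\bar\mu_k$.

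First I would establish \eqref{lem:average-1}. Multiply \eqref{alg:vec-2} on the left by $\tfrac{1}{N}(\ve{1}_N^{\T}\otimes I_n)$. Since $\ve{1}_N^{\T}\mathcal{W}=\ve{1}_N^{\T}$ by Assumption \ref{ass:network}, one gets $(\ve{1}_N^{\T}\otimes I_n)(\mathcal{W}\otimes I_n)\ve{z}_k=(\ve{1}_N^{\T}\otimes I_n)\ve{z}_k$, so averaging \eqref{alg:vec-2} yields $\bar z_{k+1}=\bar z_k+\tfrac{1}{N}\sum_{i=1}^N\big(h_i(x_{i,k+1})-h_i(x_{i,k})\big)$. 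The base case is immediate from the initialization: $\bar z_0=\tfrac{1}{N}\sum_i z_{i,0}=\tfrac{1}{N}\sum_i h_i(x_{i,0})=\phi(\ve{x}_0)$. For the inductive step, if $\bar z_k=\tfrac{1}{N}\sum_i h_i(x_{i,k})$, then the telescoping recursion gives $\bar z_{k+1}=\tfrac{1}{N}\sum_i h_i(x_{i,k+1})$, which equals $\phi(\ve{x}_{k+1})$ by definition \eqref{aggr func}. This proves \eqref{lem:average-1} for all $k$.

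Next I would establish \eqref{lem:average-2} by the same argument applied to \eqref{alg:vec-3}. Left-multiplying by $\tfrac{1}{N}(\ve{1}_N^{\T}\otimes I_n)$ and using $\ve{1}_N^{\T}\mathcal{W}=\ve{1}_N^{\T}$ again, averaging \eqref{alg:vec-3} gives $\bar\mu_{k+1}=\bar\mu_k+\tfrac{1}{N}\sum_{i=1}^N\big(\nabla_2 f_i(x_{i,k+1},z_{i,k+1})-\nabla_2 f_i(x_{i,k},z_{i,k})\big)$. The base case holds by the initialization $\mu_{i,0}=\nabla_2 f_i(x_{i,0},z_{i,0})$. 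Telescoping as before, the inductive hypothesis $\bar\mu_k=\tfrac{1}{N}\sum_i\nabla_2 f_i(x_{i,k},z_{i,k})$ propagates to $\bar\mu_{k+1}=\tfrac{1}{N}\sum_i\nabla_2 f_i(x_{i,k+1},z_{i,k+1})$, completing the induction.

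There is no substantive obstacle here; this is a standard "sum-preservation" property of gradient tracking and the proof is essentially bookkeeping. The only points requiring minor care are (i) invoking the correct initialization from Algorithm \ref{alg:primaldual} rather than treating $\ve{z}_0,\ve{\mu}_0$ as arbitrary, and (ii) being precise that the second claim is a statement about $\sum_i\nabla_2 f_i(x_{i,k},z_{i,k})$ with the local $z_{i,k}$ (not $\phi(\ve{x}_k)$) as the second argument, so that it chains correctly in the later convergence analysis when consensus errors $\|z_{i,k}-\phi(\ve{x}_k)\|$ are bounded separately.
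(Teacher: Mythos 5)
Your proposal is correct and follows essentially the same route as the paper's proof: left-multiplying \eqref{alg:vec-2} and \eqref{alg:vec-3} by the averaging operator, invoking double stochasticity of $\mathcal{W}$, and telescoping from the initialization $z_{i,0}=h_i(x_{i,0})$, $\mu_{i,0}=\nabla_2 f_i(x_{i,0},z_{i,0})$. The only difference is presentational (induction versus an explicit telescoped sum), which is immaterial.
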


\begin{proof}
  See Appendix \ref{proof:lem:average}.
\end{proof}

We show the existence and properties of the fixed points of iteration (\ref{alg:vec}), which are summarized as the following lemma.

\begin{lem}\label{lem:fix}
  Iteration (\ref{alg:vec}) has a fixed point $(\ve{x}^*,\ve{z}^*,\ve{\mu}^*,\ve{v}^*,\ve{y}^*,\ve{\lambda}^*)$ satisfying
  \begin{subequations}\label{lem:fixed-pont}
  \begin{align}
  \ve{0}&=\nabla_1f(\ve{x}^*,\ve{z}^*)+\nabla h(\ve{x}^*)\ve{\mu}^*+\Lambda^{\T}\ve{\lambda}^*,\label{lem:fixed-pont-1}\\
  \ve{z}^*&=(\mathcal{W}\otimes I_n)\ve{z}^*,\label{lem:fixed-pont-2}\\
  \ve{\mu}^*&=(\mathcal{W}\otimes I_n)\ve{\mu}^*,\label{lem:fixed-pont-3}\\
  \ve{v}^*&=\ve{\lambda}^*+\beta(\Lambda \ve{x}^*-\ve{b})+\mathcal{B}\ve{y}^*,\label{lem:fixed-pont-4}\\
  \ve{\lambda}^*&=\Pi_{\R^{Nm}_+}[\ve{v}^*],\label{lem:fixed-pont-5}\\
  \ve{0}&=\mathcal{B}\ve{v}^*,\label{lem:fixed-pont-6}
\end{align}
\end{subequations}
and $\mathcal{B}^2\ve{\lambda}^*=\ve{0}$. Moreover, for any fixed point $(\ve{x}^*,\ve{z}^*,\ve{\mu}^*,\ve{v}^*,\ve{y}^*,\ve{\lambda}^*)$, it holds that $\ve{z}^*=\ve{1}_N\otimes \phi(\ve{x}^*)$, $\ve{\mu}^*=\ve{1}_N\otimes\frac{1}{N}\sum_{i=1}^N\nabla_2f_i(x_i^*,\phi(\ve{x}^*))$, and $\ve{\lambda}^*=\ve{1}_N\otimes\lambda^*$ with $\lambda^*\in \R^m$, and $(\ve{x}^*,\lambda^*)$ is a KKT point for problem (\ref{prob}). Therefore,  $\ve{x}^*$ is the optimal solution to problem (\ref{prob}), and $\lambda^*$ is the optimal global dual variable.
\end{lem}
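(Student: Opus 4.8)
The plan is to handle the two halves separately: first exhibit a fixed point built from a KKT pair of (\ref{prob}), then show every fixed point has the claimed consensus structure and yields a KKT pair, after which optimality follows by convexity.

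\emph{Existence.} Since $f$ is $\nu$-strongly convex (Assumption \ref{ass:function}-i)) and the coupled constraint is affine, (\ref{prob}) has a unique minimizer $\ve{x}^*$, and the affine constraint qualification supplies a multiplier $\lambda^*\in\R^m_+$ with $(\ve{x}^*,\lambda^*)$ satisfying (\ref{KKT-x})--(\ref{KKT-dual}). I would then set $\ve{z}^*=\ve{1}_N\otimes\phi(\ve{x}^*)$, $\ve{\mu}^*=\ve{1}_N\otimes\big(\tfrac1N\sum_{i=1}^N\nabla_2 f_i(x_i^*,\phi(\ve{x}^*))\big)$, $\ve{\lambda}^*=\ve{1}_N\otimes\lambda^*$, $\ve{v}^*=\ve{1}_N\otimes v^*$ with $v^*:=\lambda^*+\tfrac{\beta}{N}(A\ve{x}^*-b)$, and take $\ve{y}^*$ to be any solution of $\mathcal{B}\ve{y}^*=\ve{v}^*-\ve{\lambda}^*-\beta(\Lambda\ve{x}^*-\ve{b})=:\ve{r}$. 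Such a $\ve{y}^*$ exists because $\mathcal{B}=B\otimes I_m$ is symmetric with $\mathcal{B}\ve{w}=\ve{0}$ precisely when $\ve{w}=\ve{1}_N\otimes u$ (the null-space characterization recalled before Lemma \ref{lem:network}), so the equation is solvable iff $(\ve{1}_N^{\T}\otimes I_m)\ve{r}=\ve{0}$, and indeed $\sum_i(A_ix_i^*-b_i)=A\ve{x}^*-b$ gives $(\ve{1}_N^{\T}\otimes I_m)\ve{r}=Nv^*-N\lambda^*-\beta(A\ve{x}^*-b)=\ve{0}$ by the choice of $v^*$. Plugging this tuple into (\ref{alg:vec-1})--(\ref{alg:vec-6}): the consensus updates (\ref{alg:vec-2})--(\ref{alg:vec-3}) are killed by $\mathcal{W}\ve{1}_N=\ve{1}_N$; $B\ve{1}_N=\ve{0}$ gives $\mathcal{B}\ve{\lambda}^*=\mathcal{B}^2\ve{\lambda}^*=\ve{0}$ and $\mathcal{B}\ve{v}^*=\ve{0}$; (\ref{alg:vec-1}) reduces to the stacked form of (\ref{KKT-x}); (\ref{alg:vec-5}) reduces to $\mathcal{B}\ve{v}^*=\ve{0}$ (as $\gamma>0$); and (\ref{alg:vec-6}) becomes $\lambda^*=\Pi_{\R^m_+}[\lambda^*+\tfrac{\beta}{N}(A\ve{x}^*-b)]$, which is equivalent to (\ref{KKT-dual}) since $\beta/N>0$. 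Thus the tuple is a fixed point satisfying (\ref{lem:fixed-pont-1})--(\ref{lem:fixed-pont-6}) and $\mathcal{B}^2\ve{\lambda}^*=\ve{0}$.

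\emph{Structure of an arbitrary fixed point.} Conversely, take any tuple obeying (\ref{lem:fixed-pont-1})--(\ref{lem:fixed-pont-6}). Equation (\ref{lem:fixed-pont-2}), i.e., $(I_{Nn}-\mathcal{W}\otimes I_n)\ve{z}^*=\ve{0}$, forces $\ve{z}^*=\ve{1}_N\otimes z^*$; likewise (\ref{lem:fixed-pont-3}) gives $\ve{\mu}^*=\ve{1}_N\otimes\mu^*$, and (\ref{lem:fixed-pont-6}) with $\ker\mathcal{B}=\{\ve{1}_N\otimes u\}$ gives $\ve{v}^*=\ve{1}_N\otimes v^*$; then (\ref{lem:fixed-pont-5}) forces $\ve{\lambda}^*=\ve{1}_N\otimes\lambda^*$ with $\lambda^*=\Pi_{\R^m_+}[v^*]$, whence $\mathcal{B}^2\ve{\lambda}^*=\ve{0}$. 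Applying $\tfrac1N\ve{1}_N^{\T}\otimes I_m$ to (\ref{lem:fixed-pont-4}) and using $\ve{1}_N^{\T}B=\ve{0}^{\T}$ yields $v^*=\lambda^*+\tfrac{\beta}{N}(A\ve{x}^*-b)$, so $\lambda^*=\Pi_{\R^m_+}[\lambda^*+\tfrac{\beta}{N}(A\ve{x}^*-b)]$, i.e., exactly (\ref{KKT-dual}). To identify $z^*,\mu^*$ I would invoke Lemma \ref{lem:average}, giving $\tfrac1N\sum_i z_i^*=\phi(\ve{x}^*)$ and $\tfrac1N\sum_i\mu_i^*=\tfrac1N\sum_i\nabla_2 f_i(x_i^*,z_i^*)$; combined with $z_i^*\equiv z^*$ and $\mu_i^*\equiv\mu^*$ this gives $z^*=\phi(\ve{x}^*)$ and $\mu^*=\tfrac1N\sum_i\nabla_2 f_i(x_i^*,\phi(\ve{x}^*))$. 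Substituting into the $i$-th block of (\ref{lem:fixed-pont-1}) reproduces (\ref{KKT-x}), so $(\ve{x}^*,\lambda^*)$ solves the KKT system (\ref{KKT-x})--(\ref{KKT-dual}); as (\ref{prob}) has a strongly convex objective and affine constraints, KKT is sufficient for global optimality, so $\ve{x}^*$ is the optimal solution and $\lambda^*$ an optimal dual variable.

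\emph{Main obstacle.} The routine ingredients — existence of the KKT multiplier under the affine constraint qualification, the null-space/range facts for $\mathcal{B}$, and the identities $\mathcal{W}\ve{1}_N=\ve{1}_N$, $B\ve{1}_N=\ve{0}$ — are all at hand. The point needing most care is the passage from the \emph{aggregated} dual residual to the \emph{local} KKT dual condition: one must carry the factor $\tfrac{\beta}{N}$ correctly and recognize $\lambda^*=\Pi_{\R^m_+}[\lambda^*+\tfrac{\beta}{N}(A\ve{x}^*-b)]$ as the normal-cone condition (\ref{KKT-dual}). The second subtlety is identifying $z^*$ and $\mu^*$: the fixed-point equations (\ref{lem:fixed-pont-2})--(\ref{lem:fixed-pont-3}) alone only give consensus, so one genuinely needs the average-preservation of Lemma \ref{lem:average} — equivalently, that the quantities $\tfrac1N\sum_i z_{i,k}-\phi(\ve{x}_k)$ and $\tfrac1N\sum_i\mu_{i,k}-\tfrac1N\sum_i\nabla_2 f_i(x_{i,k},z_{i,k})$ are conserved and vanish under the initialization in Algorithm \ref{alg:primaldual} — to pin the consensus values to the correct averages.
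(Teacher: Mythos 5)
Your proof is correct and follows essentially the same route as the paper's: build the fixed point from a KKT pair with $v^*=\lambda^*+\tfrac{\beta}{N}(A\ve{x}^*-b)$ and $\ve{y}^*$ chosen so that $\mathcal{B}\ve{y}^*$ matches the (zero-average) residual, then in the converse direction use the kernel characterizations of $I_N-\mathcal{W}$ and $\mathcal{B}$ together with Lemma \ref{lem:average} to pin down $z^*,\mu^*$ and recover (\ref{KKT-x})--(\ref{KKT-dual}). The only difference is that you spell out a few steps the paper leaves implicit (the solvability condition for $\ve{y}^*$ and deducing $\ve{\lambda}^*=\ve{1}_N\otimes\lambda^*$ from the blockwise projection of a consensual $\ve{v}^*$), which is fine.
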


\begin{proof}
See Appendix \ref{proof:lem:fix}.
\end{proof}

\begin{rem}
It is obtained based on the strong convexity of $f$ that the optimal solution $\ve{x}^*$ is unique and the optimal dual variable $\lambda^*$ is unique since matrix $A$ has full rank. However, the uniqueness of $\ve{y}^*$ cannot be ensured as $\ve{y}^*$ is chosen from the range space of $\mathcal{B}$.
\end{rem}

For the fixed point $\ve{x}^*$, $\ve{\lambda}^*=\ve{1}_N\otimes \lambda^*$, $\ve{v}^*=\ve{1}_N\otimes v^*$ and $\ve{y}^*$ belonging to the range space of $\mathcal{B}$,  define $\tilde{\ve{x}}_k:=\ve{x}_k-\ve{x}^*$, 
$\tilde{\ve{v}}_k:=\ve{v}_k-\ve{v}^*$, $\tilde{\ve{\lambda}}_k:=\ve{\lambda}_k-\ve{\lambda}^*$, $\tilde{\ve{y}}_k:=\ve{y}_k-\ve{y}^*$, it derives from (\ref{alg:vec}) and Lemma \ref{lem:fix} that
\begin{subequations}\label{alg:vec-error}
  \begin{align}
  \tilde{\ve{x}}_{k+1} = &\tilde{\ve{x}}_k-\alpha\Lambda^{\T}\tilde{\ve{\lambda}}_k-\alpha\Big(\nabla_1f(\ve{x}_k,\ve{z}_k)+\nabla h(\ve{x}_k)\ve{\mu}_k\notag\\
  &-\nabla_1f(\ve{x}^*,(\ve{1}_N\otimes \phi(\ve{x}^*))\notag\\
  &-\nabla h(\ve{x}^*)\Big(\ve{1}_N\otimes\frac{1}{N} \sum_{i=1}^N\nabla_2f_i(x_i^*,\phi(\ve{x}^*))\Big)\Big),\label{alg:vec-error-1}\\
  \tilde{\ve{v}}_{k+1} = &\tilde{\ve{\lambda}}_k-\mathcal{B}^2\tilde{\ve{\lambda}}_k+\beta\Lambda \tilde{\ve{x}}_{k+1}+\mathcal{B}\tilde{\ve{y}}_k\label{alg:vec-error-4},\\
  \tilde{\ve{\lambda}}_{k+1}= &\Pi_{\R^{Nm}_+}[\ve{v}_{k+1}]-\Pi_{\R^{Nm}_+}[\ve{v}^*],\label{alg:vec-error-5}\\
  \tilde{\ve{y}}_{k+1} = &\tilde{\ve{y}}_k-\gamma\mathcal{B}\tilde{\ve{v}}_{k+1}.\label{alg:vec-error-6}
\end{align}
\end{subequations}

Next, we give a crucial lemma which characterizes the recursive inequality relations on error variables $\|\tilde{\ve{x}}_k\|$, $\|\tilde{\ve{\lambda}}_k\|$, $\|\tilde{\ve{y}}_k\|$, $\|\ve{z}_k-\ve{1}_N\otimes\bar{z}_k\|$ and $\|\ve{\mu}_k-\ve{1}_N\otimes\bar{\mu}_k\|$.

\begin{lem}\label{lem:error-iter}
  Suppose Assumptions \ref{ass:function} and \ref{ass:network} are satisfied, then the errors $\|\tilde{\ve{x}}_k\|$, $\|\tilde{\ve{\lambda}}_k\|$, $\|\tilde{\ve{y}}_k\|$, $\|\ve{z}_k-\ve{1}_N\otimes\bar{z}_k\|$ and $\|\ve{\mu}_k-\ve{1}_N\otimes\bar{\mu}_k\|$ generated by Algorithm \ref{alg:primaldual} satisfy
  \begin{align}
    &\|\tilde{\ve{x}}_{k+1}\|^2\notag\\
    \leq&\Big(1-\alpha\Big(\frac{\nu}{2}-4\alpha L_1^2(1+2L_3^2)\Big)\Big)\|\tilde{\ve{x}}_k\|^2-\alpha\nu\|\tilde{\ve{x}}_k\|^2\notag\\
    &-\alpha^2\|\Lambda^{\T}\tilde{\ve{\lambda}}_k\|^2
    -2\alpha\tilde{\ve{\lambda}}_k^{\T}\Lambda\tilde{\ve{x}}_{k+1}\notag\\
    &+4\Big(2\alpha^2+\frac{\alpha}{\nu}\Big)L_1^2\|\ve{z}_k-\ve{1}_N\otimes\bar{z}_k\|^2\notag\\
    &+2\Big(\alpha^2+\frac{2\alpha}{\nu}\Big)L_3^2\|\ve{\mu}_k-\ve{1}_N\otimes\bar{\mu}_k\|^2,\label{error-x}
  \end{align}
  \begin{align}
    &\|\tilde{\ve{\lambda}}_{k+1}\|_{I_{Nm}-\gamma\mathcal{B}^2}^2+\frac{1}{\gamma}\|\tilde{\ve{y}}_{k+1}\|^2\notag\\
    \leq&\|\tilde{\ve{\lambda}}_k\|^2
    +2\|\mathcal{B}^2\tilde{\ve{\lambda}}_k\|^2+2\beta^2\|\Lambda\tilde{\ve{x}}_{k+1}\|^2
    -2\tilde{\ve{\lambda}}_k^{\T}\mathcal{B}^2\tilde{\ve{\lambda}}_k\notag\\
    &+2\beta\tilde{\ve{\lambda}}_k^{\T}\Lambda\tilde{\ve{x}}_{k+1}-\|\mathcal{B}\tilde{\ve{y}}_k\|^2+\frac{1}{\gamma}\|\tilde{\ve{y}}_k\|^2,\label{error-dual}
  \end{align}
   where $\gamma>0$ is selected as $\gamma<\frac{1}{\underline{\sigma}^2(\mathcal{B})}$,
  \begin{align}
    &\|\ve{z}_{k+1}-\ve{1}_N\otimes\bar{z}_{k+1}\|^2\notag\\
    \leq&\Big(\frac{1+\rho^2}{2}+16\frac{1+\rho^2}{1-\rho^2}L_1^2L_3^2\alpha^2\Big)\|\ve{z}_k-\ve{1}_N\otimes\bar{z}_k\|^2\notag\\
    &+8\frac{1+\rho^2}{1-\rho^2}L_1^2L_3^2(1+2L_3^2)\alpha^2\|\tilde{\ve{x}}_k\|^2+2\frac{1+\rho^2}{1-\rho^2}L_3^2\alpha^2\|\Lambda^{\T}\tilde{\ve{\lambda}}_k\|^2\notag\\
  &+4\frac{1+\rho^2}{1-\rho^2}L_3^4\alpha^2\|\ve{\mu}_k-\ve{1}_N\otimes\bar{\mu}_k\|^2,\label{error-aggre}
  \end{align}
  \begin{align}
    &\|\ve{\mu}_{k+1}-\ve{1}_N\otimes\bar{\mu}_{k+1}\|^2\notag\\
    \leq&\Big(\frac{1+\rho^2}{2}+8\frac{1+\rho^2}{1-\rho^2}L_2^2L_3^2(1+2L_3^2)\alpha^2\Big)\|\ve{\mu}_k-\ve{1}_N\otimes\bar{\mu}_k\|^2\notag\\
    &+16\frac{1+\rho^2}{1-\rho^2}L_1^2L_2^2(1+2L_3^2)^2\alpha^2\|\tilde{\ve{x}}_k\|^2\notag\\
  &+16\frac{1+\rho^2}{1-\rho^2}(2L_1^2L_2^2(1+2L_3^2)\alpha^2+L_2^2)\|\ve{z}_k-\ve{1}_N\otimes\bar{\ve{z}}_k\|^2\notag\\
   &+4\frac{1+\rho^2}{1-\rho^2}L_2^2(1+2L_3^2)\alpha^2\|\Lambda^{\T}\tilde{\ve{\lambda}}_k\|^2.\label{error-nabla2}
  \end{align}
\end{lem}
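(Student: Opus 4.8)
The plan is to prove each of the four recursions separately, in all cases starting from the error dynamics \eqref{alg:vec-error}, the fixed-point identities of Lemma~\ref{lem:fix}, and the averaging identities of Lemma~\ref{lem:average}, and then applying Young's inequality repeatedly. A common preprocessing step underlies the $\tilde{\ve x}$-, $\ve z$- and $\ve\mu$-bounds. Writing $\Phi(\ve x,\ve z):=\nabla_1f(\ve x,\ve z)+\nabla h(\ve x)\big(\ve{1}_N\otimes\frac{1}{N}\sum_i\nabla_2f_i(x_i,z_i)\big)$, Lemma~\ref{lem:average} yields $\nabla_1f(\ve x_k,\ve z_k)+\nabla h(\ve x_k)\ve\mu_k=\Phi(\ve x_k,\ve z_k)+\nabla h(\ve x_k)(\ve\mu_k-\ve{1}_N\otimes\bar\mu_k)$, while $\nabla f(\ve x)=\Phi(\ve x,\ve{1}_N\otimes\phi(\ve x))$ and $\bar z_k=\phi(\ve x_k)$. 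Hence, after subtracting \eqref{lem:fixed-pont-1}, the driving term in \eqref{alg:vec-1} equals $\big(\nabla f(\ve x_k)-\nabla f(\ve x^*)\big)+e_k+\Lambda^{\T}\tilde{\ve\lambda}_k$ with $e_k:=\big(\Phi(\ve x_k,\ve z_k)-\Phi(\ve x_k,\ve{1}_N\otimes\bar z_k)\big)+\nabla h(\ve x_k)(\ve\mu_k-\ve{1}_N\otimes\bar\mu_k)$. By Assumptions~\ref{ass:function}-ii) and iv) (the latter also giving that $h_i$ is $L_3$-Lipschitz, so $\|\ve{1}_N\otimes(\phi(\ve x_k)-\phi(\ve x^*))\|\le L_3\|\tilde{\ve x}_k\|$), one obtains $\|e_k\|\le L_1\|\ve z_k-\ve{1}_N\otimes\bar z_k\|+L_3\|\ve\mu_k-\ve{1}_N\otimes\bar\mu_k\|$ and $\|\nabla f(\ve x_k)-\nabla f(\ve x^*)\|^2\le 2L_1^2(1+2L_3^2)\|\tilde{\ve x}_k\|^2$.

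For \eqref{error-x}, I would expand $\|\tilde{\ve x}_{k+1}\|^2$ from \eqref{alg:vec-error-1}, set $p_k:=(\nabla f(\ve x_k)-\nabla f(\ve x^*))+e_k$ so that $\tilde{\ve x}_{k+1}=\tilde{\ve x}_k-\alpha\Lambda^{\T}\tilde{\ve\lambda}_k-\alpha p_k$, and substitute $\tilde{\ve x}_k=\tilde{\ve x}_{k+1}+\alpha\Lambda^{\T}\tilde{\ve\lambda}_k+\alpha p_k$ into the cross term $-2\alpha\tilde{\ve\lambda}_k^{\T}\Lambda\tilde{\ve x}_k$. The $\tilde{\ve\lambda}_k^{\T}\Lambda p_k$ contributions then cancel against those coming from $\alpha^2\|\Lambda^{\T}\tilde{\ve\lambda}_k+p_k\|^2$, leaving exactly the terms $-2\alpha\tilde{\ve\lambda}_k^{\T}\Lambda\tilde{\ve x}_{k+1}$, $-\alpha^2\|\Lambda^{\T}\tilde{\ve\lambda}_k\|^2$, together with $\|\tilde{\ve x}_k\|^2-2\alpha\tilde{\ve x}_k^{\T}p_k+\alpha^2\|p_k\|^2$. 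Applying strong convexity (Assumption~\ref{ass:function}-i)) to $-2\alpha\tilde{\ve x}_k^{\T}(\nabla f(\ve x_k)-\nabla f(\ve x^*))\le-2\alpha\nu\|\tilde{\ve x}_k\|^2$, Young's inequality to $-2\alpha\tilde{\ve x}_k^{\T}e_k\le\frac{\alpha\nu}{2}\|\tilde{\ve x}_k\|^2+\frac{2\alpha}{\nu}\|e_k\|^2$ (which is why the $\|\tilde{\ve x}_k\|^2$-coefficient splits as $-\alpha(\frac{\nu}{2})-\alpha\nu$), and $\|p_k\|^2\le 2\|\nabla f(\ve x_k)-\nabla f(\ve x^*)\|^2+2\|e_k\|^2$, then inserting the bounds above, yields \eqref{error-x}; the precise coefficients multiplying $\|\ve z_k-\ve{1}_N\otimes\bar z_k\|^2$ and $\|\ve\mu_k-\ve{1}_N\otimes\bar\mu_k\|^2$ are a bookkeeping matter of the chosen Young parameters.

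For \eqref{error-dual} — which I expect to be the main obstacle — I would combine the $\tilde{\ve\lambda}$- and $\tilde{\ve y}$-recursions. Nonexpansiveness of $\Pi_{\R^{Nm}_+}$ gives $\|\tilde{\ve\lambda}_{k+1}\|^2\le\|\tilde{\ve v}_{k+1}\|^2$, hence $\|\tilde{\ve\lambda}_{k+1}\|^2_{I_{Nm}-\gamma\mathcal B^2}\le\|\tilde{\ve v}_{k+1}\|^2-\gamma\|\mathcal B\tilde{\ve\lambda}_{k+1}\|^2$. Expanding $\frac{1}{\gamma}\|\tilde{\ve y}_{k+1}\|^2=\frac{1}{\gamma}\|\tilde{\ve y}_k\|^2-2\tilde{\ve y}_k^{\T}\mathcal B\tilde{\ve v}_{k+1}+\gamma\|\mathcal B\tilde{\ve v}_{k+1}\|^2$ and using the key identity $\mathcal B\tilde{\ve y}_k=\tilde{\ve v}_{k+1}-(I_{Nm}-\mathcal B^2)\tilde{\ve\lambda}_k-\beta\Lambda\tilde{\ve x}_{k+1}$ from \eqref{alg:vec-error-4}, the cross term becomes $-2\|\tilde{\ve v}_{k+1}\|^2+2\big[(I_{Nm}-\mathcal B^2)\tilde{\ve\lambda}_k+\beta\Lambda\tilde{\ve x}_{k+1}\big]^{\T}\tilde{\ve v}_{k+1}$; completing the square in $\tilde{\ve v}_{k+1}=\big[(I_{Nm}-\mathcal B^2)\tilde{\ve\lambda}_k+\beta\Lambda\tilde{\ve x}_{k+1}\big]+\mathcal B\tilde{\ve y}_k$ produces the terms $\|(I_{Nm}-\mathcal B^2)\tilde{\ve\lambda}_k+\beta\Lambda\tilde{\ve x}_{k+1}\|^2-\|\mathcal B\tilde{\ve y}_k\|^2$, while the stepsize choice $\gamma<1/\underline\sigma^2(\mathcal B)$ is what lets the remaining $\gamma\mathcal B^2$-weighted piece be absorbed. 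Finally, expanding $\|(I_{Nm}-\mathcal B^2)\tilde{\ve\lambda}_k+\beta\Lambda\tilde{\ve x}_{k+1}\|^2$ and bounding the cross term $-2\beta\tilde{\ve\lambda}_k^{\T}\mathcal B^2\Lambda\tilde{\ve x}_{k+1}\le\|\mathcal B^2\tilde{\ve\lambda}_k\|^2+\beta^2\|\Lambda\tilde{\ve x}_{k+1}\|^2$ by Young's inequality produces the $2\|\mathcal B^2\tilde{\ve\lambda}_k\|^2$ and $2\beta^2\|\Lambda\tilde{\ve x}_{k+1}\|^2$ in \eqref{error-dual}. The delicate part is making all the $\tilde{\ve v}_{k+1}$-cross terms cancel exactly; this is precisely where the slightly unusual structure of the algorithm (the $-\mathcal B^2\tilde{\ve\lambda}_k$ correction and the $\ve v$-versus-$\ve\lambda$ distinction) is essential.

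For \eqref{error-aggre} and \eqref{error-nabla2}, I would use that $\mathcal W$ doubly stochastic implies $\ve{1}_N\otimes\bar z_{k+1}=\big(\frac{1}{N}\ve{1}_N\ve{1}_N^{\T}\otimes I_n\big)\ve z_{k+1}$; combining \eqref{alg:vec-2} with Lemma~\ref{lem:network}-i),iv), nonexpansiveness of $I_{Nn}-\frac{1}{N}\ve{1}_N\ve{1}_N^{\T}\otimes I_n$, and Assumption~\ref{ass:function}-iv) gives the scalar recursion $\|\ve z_{k+1}-\ve{1}_N\otimes\bar z_{k+1}\|\le\rho\|\ve z_k-\ve{1}_N\otimes\bar z_k\|+L_3\|\ve x_{k+1}-\ve x_k\|$. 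Squaring via $\|a+b\|^2\le(1+\eta)\|a\|^2+(1+\eta^{-1})\|b\|^2$ with $\eta=\frac{1-\rho^2}{2\rho^2}$ produces the $\frac{1+\rho^2}{2}$ and $\frac{1+\rho^2}{1-\rho^2}$ factors, and substituting $\ve x_{k+1}-\ve x_k=-\alpha\big((\nabla f(\ve x_k)-\nabla f(\ve x^*))+e_k+\Lambda^{\T}\tilde{\ve\lambda}_k\big)$ together with the preprocessing bounds yields \eqref{error-aggre}. Inequality \eqref{error-nabla2} is identical in structure via \eqref{alg:vec-3} and Assumption~\ref{ass:function}-iii), except that it also involves $\|\ve z_{k+1}-\ve z_k\|$, which is controlled by writing $\ve z_{k+1}-\ve z_k=\big((\mathcal W-I_N)\otimes I_n\big)(\ve z_k-\ve{1}_N\otimes\bar z_k)+h(\ve x_{k+1})-h(\ve x_k)$ and invoking Lemma~\ref{lem:network}-iii) to get $\|\ve z_{k+1}-\ve z_k\|\le 2\|\ve z_k-\ve{1}_N\otimes\bar z_k\|+L_3\|\ve x_{k+1}-\ve x_k\|$; plugging this in and again using Young's inequality closes the argument.
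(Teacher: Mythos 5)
Your treatment of \eqref{error-x}, \eqref{error-aggre} and \eqref{error-nabla2} follows essentially the same route as the paper's. The paper expands $\|\tilde{\ve{x}}_{k+1}\|^2$ via $2(\tilde{\ve{x}}_k-\tilde{\ve{x}}_{k+1})^{\T}\tilde{\ve{x}}_{k+1}=\|\tilde{\ve{x}}_k\|^2-\|\tilde{\ve{x}}_{k+1}\|^2-\|\ve{x}_{k+1}-\ve{x}_k\|^2$ and completes a square so that the $\|\ve{x}_{k+1}-\ve{x}_k\|^2$ terms cancel and $-\alpha^2\|\Lambda^{\T}\tilde{\ve{\lambda}}_k\|^2$, $-2\alpha\tilde{\ve{\lambda}}_k^{\T}\Lambda\tilde{\ve{x}}_{k+1}$ emerge; this is algebraically the same cancellation you get by substituting $\tilde{\ve{x}}_k=\tilde{\ve{x}}_{k+1}+\alpha\Lambda^{\T}\tilde{\ve{\lambda}}_k+\alpha p_k$ into the cross term, and your strong-convexity/Young split of $-2\alpha\tilde{\ve{x}}_k^{\T}p_k$ and the $\frac{1+\rho^2}{2}$, $\frac{1+\rho^2}{1-\rho^2}$ consensus contraction with $\eta=\frac{1-\rho^2}{2\rho^2}$ are exactly what the paper does. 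One bookkeeping caveat: your top-level split $\|p_k\|^2\le 2\|\nabla f(\ve{x}_k)-\nabla f(\ve{x}^*)\|^2+2\|e_k\|^2$ charges the $\ve{\mu}$-consensus error twice and produces a coefficient $4\alpha^2L_3^2$ in \eqref{error-x} (and $8L_3^2\alpha^2$ inside the bound on $\|\ve{x}_{k+1}-\ve{x}_k\|^2$) where the lemma states $2\alpha^2L_3^2$ (resp.\ $4L_3^2\alpha^2$); the paper instead peels off only $\nabla h(\ve{x}_k)(\ve{\mu}_k-\ve{1}_N\otimes\bar{\mu}_k)$ at the first level and keeps the remaining difference intact. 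This only rescales constants and would not change the linear-rate conclusion, but as written your sketch establishes a slightly weaker version of \eqref{error-x} and \eqref{error-aggre} than the one stated.

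The genuine gap is in \eqref{error-dual} (which the paper itself does not prove, deferring to \cite{meng2022linear}). Your identities are correct up to a point: with $u:=(I_{Nm}-\mathcal{B}^2)\tilde{\ve{\lambda}}_k+\beta\Lambda\tilde{\ve{x}}_{k+1}$ one obtains exactly $\|\tilde{\ve{v}}_{k+1}\|_{I_{Nm}-\gamma\mathcal{B}^2}^2+\frac{1}{\gamma}\|\tilde{\ve{y}}_{k+1}\|^2=\|u\|^2-\|\mathcal{B}\tilde{\ve{y}}_k\|^2+\frac{1}{\gamma}\|\tilde{\ve{y}}_k\|^2$, and Young's inequality applied to $-2\beta\tilde{\ve{\lambda}}_k^{\T}\mathcal{B}^2\Lambda\tilde{\ve{x}}_{k+1}$ inside $\|u\|^2$ yields the stated right-hand side. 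What is missing is the passage from $\|\tilde{\ve{v}}_{k+1}\|_{I_{Nm}-\gamma\mathcal{B}^2}^2$ to $\|\tilde{\ve{\lambda}}_{k+1}\|_{I_{Nm}-\gamma\mathcal{B}^2}^2$: plain nonexpansiveness gives $\|\tilde{\ve{\lambda}}_{k+1}\|^2\le\|\tilde{\ve{v}}_{k+1}\|^2$ but says nothing about the $\mathcal{B}^2$-weighted parts, so the leftover term $\gamma\big(\|\mathcal{B}\tilde{\ve{v}}_{k+1}\|^2-\|\mathcal{B}\tilde{\ve{\lambda}}_{k+1}\|^2\big)=\gamma\|\mathcal{B}(\tilde{\ve{v}}_{k+1}-\tilde{\ve{\lambda}}_{k+1})\|^2+2\gamma\tilde{\ve{\lambda}}_{k+1}^{\T}\mathcal{B}^2(\tilde{\ve{v}}_{k+1}-\tilde{\ve{\lambda}}_{k+1})$ contains a sign-indefinite cross term, and the condition $\gamma<1/\underline{\sigma}^2(\mathcal{B})$ you invoke does not control it ($\Pi_{\R^{Nm}_+}$ is componentwise, whereas $I_{Nm}-\gamma\mathcal{B}^2$ is not diagonal, so nonexpansiveness in that weighted norm is not automatic). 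Closing this step requires an additional device, e.g.\ firm nonexpansiveness of the projection combined with the structure of the residual $\ve{v}-\Pi_{\R^{Nm}_+}[\ve{v}]$, or whatever argument the cited lemma actually uses; as sketched, the "absorption" claim would not go through.
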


\begin{proof}
See Appendix \ref{proof:lem:error-iter}.
\end{proof}

At present, it is ready to present our main result of this article.

\begin{thm}\label{thm:linearrate}
  Suppose Assumptions \ref{ass:function} and \ref{ass:network} are satisfied. Let the sequences $\{\ve{x}_k\}$, $\{\ve{\lambda}_k\}$, $\{\ve{y}_k\}$, $\{\ve{z}_k\}$ and $\{\ve{\mu}_k\}$ be generated by Algorithm \ref{alg:primaldual}. Define
  \begin{align*}
  \varepsilon_k:=&\|\tilde{\ve{x}}_k\|_{I_d-2\alpha\beta\Lambda^{\T}\Lambda}^2+\frac{\alpha}{\beta}\|\tilde{\ve{\lambda}}_k\|_{I_{Nm}-\gamma\mathcal{B}^2}^2
  +\frac{\alpha}{\beta\gamma}\|\tilde{\ve{y}}_k\|^2\notag\\
  +&\frac{1-\rho^2}{4L_3^2(1+\rho^2)}\|\ve{z}_k-\ve{1}_N\otimes\bar{z}_k\|^2\notag\\
  +&\frac{(1-\rho^2)\alpha}{(1+\rho^2)\gamma}\|\ve{\mu}_k-\ve{1}_N\otimes\bar{\mu}_k\|^2.
   \end{align*}
If the stepsizes $\alpha$, $\beta$, $\gamma>0$ are chosen such that
\begin{align}\label{eq:stepsize}
  \kappa:=&1-\alpha\Big(\frac{\nu}{2}-4\alpha L_1^2(1+2L_3^2)\Big(\frac{3}{2}+4L_2^2(1+2L_3^2)\frac{\alpha}{\gamma}\Big)\Big)<1,\notag\\
  \kappa_1:=&\frac{1+\rho^2}{2}+16\frac{1+\rho^2}{1-\rho^2}L_1^2L_3^2(3\alpha^2+\frac{\alpha}{\nu}\Big)\notag\\
  &+\frac{64L_3^2\alpha}{\gamma}\frac{1+\rho^2}{1-\rho^2}(2L_1^2L_2^2(1+2L_3^2)\alpha^2+L_2^2)<1,\notag\\
  \kappa_2:=&\frac{1+\rho^2}{2}+8\frac{1+\rho^2}{1-\rho^2}L_2^2L_3^2(1+2L_3^2)\alpha^2\notag\\
  &+2\frac{1+\rho^2}{1-\rho^2}L_3^2\Big(\alpha+\frac{2}{\nu}\Big)\gamma+\gamma\alpha L_3^2\frac{1+\rho^2}{1-\rho^2}< 1,\notag\\
  \beta<&\min\Big\{\frac{\nu}{2\kappa\lambda_{\max}(\Lambda^{\T}\Lambda)},\frac{1}{\alpha c_1\lambda_{\min}(\Lambda\Lambda^{\T})}\Big\},\notag\\
  \gamma<&\min\Big\{\frac{2-2\lambda_{\max}^2(\mathcal{B})}{1-\alpha\beta c_1\lambda_{\min}(\Lambda\Lambda^{\T})},\frac{1}{\underline{\sigma}^2(\mathcal{B})}\Big\},
 \end{align}
with $c_1:=\frac{1}{2}-4L_2^2(1+2L_3^2)\frac{\alpha}{\gamma}$, then $\varepsilon_k$ converges to zero at a linear rate. Specifically,
  \begin{align}\label{thm:rate}
    \varepsilon_{k+1}\leq \tau \varepsilon_k,
  \end{align}
  where $\tau:=\max\{\kappa,\kappa_1,\kappa_2,1-\alpha\beta c_1\lambda_{\min}(\Lambda\Lambda^{\T}),1-\gamma\underline{\sigma}^2(\mathcal{B})\}\in(0,1)$.
\end{thm}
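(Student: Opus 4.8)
The plan is to use $\varepsilon_k$ as a Lyapunov function and to prove the one-step contraction $\varepsilon_{k+1}\le\tau\varepsilon_k$ by assembling a single master inequality out of the four recursions of Lemma \ref{lem:error-iter} and then matching coefficients against the stepsize conditions \eqref{eq:stepsize}. Since Algorithm \ref{alg:primaldual} is equivalent to \eqref{alg:vec}, I work throughout with the error system \eqref{alg:vec-error} and the estimates \eqref{error-x}--\eqref{error-nabla2}.

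First I would add these four inequalities using exactly the weights that appear in the definition of $\varepsilon_k$: coefficient $1$ on \eqref{error-x}, $\frac{\alpha}{\beta}$ on \eqref{error-dual}, $\frac{1-\rho^2}{4L_3^2(1+\rho^2)}$ on \eqref{error-aggre}, and $\frac{(1-\rho^2)\alpha}{(1+\rho^2)\gamma}$ on \eqref{error-nabla2}. Two structural features make the left side collapse exactly to $\varepsilon_{k+1}$: the cross terms $-2\alpha\tilde{\ve{\lambda}}_k^{\T}\Lambda\tilde{\ve{x}}_{k+1}$ in \eqref{error-x} and $\frac{\alpha}{\beta}\cdot 2\beta\,\tilde{\ve{\lambda}}_k^{\T}\Lambda\tilde{\ve{x}}_{k+1}$ in \eqref{error-dual} cancel, and moving $\frac{\alpha}{\beta}\cdot 2\beta^2\|\Lambda\tilde{\ve{x}}_{k+1}\|^2=2\alpha\beta\|\Lambda\tilde{\ve{x}}_{k+1}\|^2$ to the left upgrades $\|\tilde{\ve{x}}_{k+1}\|^2$ to $\|\tilde{\ve{x}}_{k+1}\|^2_{I_d-2\alpha\beta\Lambda^{\T}\Lambda}$, so that the combined left side is precisely $\varepsilon_{k+1}$. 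On the right side the weighted bookkeeping collapses the coefficient of $\|\tilde{\ve{x}}_k\|^2$ to $\kappa-\alpha\nu$, that of $\|\ve{z}_k-\ve{1}_N\otimes\bar{z}_k\|^2$ to $\frac{1-\rho^2}{4L_3^2(1+\rho^2)}\kappa_1$, that of $\|\ve{\mu}_k-\ve{1}_N\otimes\bar{\mu}_k\|^2$ to $\frac{(1-\rho^2)\alpha}{(1+\rho^2)\gamma}\kappa_2$, and that of $\|\Lambda^{\T}\tilde{\ve{\lambda}}_k\|^2$ to $-\alpha^2 c_1$, leaving the residual dual/auxiliary block $\frac{\alpha}{\beta}\|\tilde{\ve{\lambda}}_k\|^2+\frac{2\alpha}{\beta}\big(\|\mathcal{B}^2\tilde{\ve{\lambda}}_k\|^2-\tilde{\ve{\lambda}}_k^{\T}\mathcal{B}^2\tilde{\ve{\lambda}}_k\big)+\frac{\alpha}{\beta\gamma}\|\tilde{\ve{y}}_k\|^2-\frac{\alpha}{\beta}\|\mathcal{B}\tilde{\ve{y}}_k\|^2$.

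Next I would dominate each block by $\tau$ times the matching term of $\varepsilon_k$. For the $\tilde{\ve{x}}_k$ block, the spare $-\alpha\nu\|\tilde{\ve{x}}_k\|^2$ absorbs the difference between $\|\tilde{\ve{x}}_k\|^2$ and $\|\tilde{\ve{x}}_k\|^2_{I_d-2\alpha\beta\Lambda^{\T}\Lambda}$ using $\|\Lambda\tilde{\ve{x}}_k\|^2\le\lambda_{\max}(\Lambda^{\T}\Lambda)\|\tilde{\ve{x}}_k\|^2$, $\kappa\le\tau$, and $\beta<\frac{\nu}{2\kappa\lambda_{\max}(\Lambda^{\T}\Lambda)}$; the two consensus blocks are dispatched immediately by $\kappa_1\le\tau$ and $\kappa_2\le\tau$. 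For the dual block, Assumption \ref{ass:function}-v) gives $\Lambda\Lambda^{\T}\succ0$, hence $\|\Lambda^{\T}\tilde{\ve{\lambda}}_k\|^2\ge\lambda_{\min}(\Lambda\Lambda^{\T})\|\tilde{\ve{\lambda}}_k\|^2$; writing $\|\tilde{\ve{\lambda}}_k\|^2=\|\tilde{\ve{\lambda}}_k\|^2_{I_{Nm}-\gamma\mathcal{B}^2}+\gamma\,\tilde{\ve{\lambda}}_k^{\T}\mathcal{B}^2\tilde{\ve{\lambda}}_k$, the term $-\alpha^2 c_1\|\Lambda^{\T}\tilde{\ve{\lambda}}_k\|^2$ scales both pieces by the factor $1-\alpha\beta c_1\lambda_{\min}(\Lambda\Lambda^{\T})\le\tau$ (which is why $c_1>0$, i.e.\ $4L_2^2(1+2L_3^2)\alpha/\gamma<\frac{1}{2}$, and $\beta<\frac{1}{\alpha c_1\lambda_{\min}(\Lambda\Lambda^{\T})}$ are imposed), while the residual $\mathcal{B}^2$‑quadratic combination reduces, via $\|\mathcal{B}^2\tilde{\ve{\lambda}}_k\|^2\le\lambda_{\max}^2(\mathcal{B})\tilde{\ve{\lambda}}_k^{\T}\mathcal{B}^2\tilde{\ve{\lambda}}_k$, to $\frac{\alpha}{\beta}\big[\gamma(1-\alpha\beta c_1\lambda_{\min}(\Lambda\Lambda^{\T}))-2(1-\lambda_{\max}^2(\mathcal{B}))\big]\tilde{\ve{\lambda}}_k^{\T}\mathcal{B}^2\tilde{\ve{\lambda}}_k\le0$ precisely under $\gamma<\frac{2-2\lambda_{\max}^2(\mathcal{B})}{1-\alpha\beta c_1\lambda_{\min}(\Lambda\Lambda^{\T})}$; this yields the bound $\tau\cdot\frac{\alpha}{\beta}\|\tilde{\ve{\lambda}}_k\|^2_{I_{Nm}-\gamma\mathcal{B}^2}$. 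For the auxiliary block, iteration \eqref{alg:vec-5}--\eqref{alg:vec-6}, together with $\ve{y}^*$ and $\ve{y}_0$ lying in $\mathrm{range}(\mathcal{B})$, keeps $\tilde{\ve{y}}_k\in\mathrm{range}(\mathcal{B})$, so $\|\mathcal{B}\tilde{\ve{y}}_k\|^2\ge\underline{\sigma}^2(\mathcal{B})\|\tilde{\ve{y}}_k\|^2$ and $\frac{\alpha}{\beta\gamma}\|\tilde{\ve{y}}_k\|^2-\frac{\alpha}{\beta}\|\mathcal{B}\tilde{\ve{y}}_k\|^2\le(1-\gamma\underline{\sigma}^2(\mathcal{B}))\frac{\alpha}{\beta\gamma}\|\tilde{\ve{y}}_k\|^2\le\tau\cdot\frac{\alpha}{\beta\gamma}\|\tilde{\ve{y}}_k\|^2$ (using $\gamma<1/\underline{\sigma}^2(\mathcal{B})$). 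Summing the five bounds gives $\varepsilon_{k+1}\le\tau\varepsilon_k$; since \eqref{eq:stepsize} also forces $I_d-2\alpha\beta\Lambda^{\T}\Lambda$ and $I_{Nm}-\gamma\mathcal{B}^2$ to be positive (semi)definite, $\varepsilon_k\ge0$ and iterating yields $\varepsilon_k\le\tau^k\varepsilon_0\to0$ with $\tau\in(0,1)$, from which the linear convergence of $\tilde{\ve{x}}_k$, $\tilde{\ve{\lambda}}_k$, $\tilde{\ve{y}}_k$ and the consensus errors follows.

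The step I expect to be the main obstacle is the dual block: one must reconcile the plain norm $\|\tilde{\ve{\lambda}}_k\|^2$ produced by \eqref{error-dual} with the weighted norm $\|\tilde{\ve{\lambda}}_k\|^2_{I_{Nm}-\gamma\mathcal{B}^2}$ that sits inside $\varepsilon_k$, while simultaneously extracting the contraction factor $1-\alpha\beta c_1\lambda_{\min}(\Lambda\Lambda^{\T})$ out of $-\alpha^2 c_1\|\Lambda^{\T}\tilde{\ve{\lambda}}_k\|^2$ and controlling the apparently indefinite combination $\|\mathcal{B}^2\tilde{\ve{\lambda}}_k\|^2-\tilde{\ve{\lambda}}_k^{\T}\mathcal{B}^2\tilde{\ve{\lambda}}_k$ with only the spectral bound $\lambda_{\max}(\mathcal{B}^2)<1$ at hand; making these three effects cooperate is exactly what pins down the precise form of the $\gamma$‑bound in \eqref{eq:stepsize}. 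The remaining work is routine Young/Cauchy--Schwarz coefficient bookkeeping, which I would organize into a single table matching each right‑hand‑side coefficient to its $\tau$‑scaled counterpart in $\varepsilon_k$.
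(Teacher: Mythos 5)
Your proposal is correct and follows essentially the same route as the paper: you form the same weighted Lyapunov combination of the four recursions of Lemma \ref{lem:error-iter} (with the cross terms $\mp2\alpha\tilde{\ve{\lambda}}_k^{\T}\Lambda\tilde{\ve{x}}_{k+1}$ cancelling and $2\alpha\beta\|\Lambda\tilde{\ve{x}}_{k+1}\|^2$ absorbed into the weighted norm of $\tilde{\ve{x}}_{k+1}$), and then dominate the $\tilde{\ve{x}}_k$, $\tilde{\ve{\lambda}}_k$ and $\tilde{\ve{y}}_k$ blocks exactly as in the paper's estimates, via the split $\|\tilde{\ve{\lambda}}_k\|^2=\|\tilde{\ve{\lambda}}_k\|_{I_{Nm}-\gamma\mathcal{B}^2}^2+\gamma\|\mathcal{B}\tilde{\ve{\lambda}}_k\|^2$ together with $\|\mathcal{B}^2\tilde{\ve{\lambda}}_k\|^2\le\lambda_{\max}^2(\mathcal{B})\|\mathcal{B}\tilde{\ve{\lambda}}_k\|^2$ and $\|\mathcal{B}\tilde{\ve{y}}_k\|^2\ge\underline{\sigma}^2(\mathcal{B})\|\tilde{\ve{y}}_k\|^2$ for $\tilde{\ve{y}}_k$ in the range of $\mathcal{B}$. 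The coefficient bookkeeping you describe (e.g.\ the $\|\tilde{\ve{x}}_k\|^2$ coefficient collapsing to $\kappa-\alpha\nu$) checks out against the stepsize conditions \eqref{eq:stepsize}, so no gap remains.
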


\begin{proof}
After a simple calculation, one has from Lemma \ref{lem:error-iter} that
\begin{align}\label{eq:1}
  &\|\tilde{\ve{x}}_{k+1}\|_{I_d-2\alpha\beta\Lambda^{\T}\Lambda}^2+\frac{\alpha}{\beta}\|\tilde{\ve{\lambda}}_{k+1}\|_{I_{Nm}-\gamma\mathcal{B}^2}^2
  +\frac{\alpha}{\beta\gamma}\|\tilde{\ve{y}}_{k+1}\|^2\notag\\
  &+\frac{1-\rho^2}{4L_3^2(1+\rho^2)}\|\ve{z}_{k+1}-\ve{1}_N\otimes\bar{z}_{k+1}\|^2\notag\\
  &+\frac{\alpha(1-\rho^2)}{\gamma(1+\rho^2)}\|\ve{\mu}_{k+1}-\ve{1}_N\otimes\bar{\mu}_{k+1}\|^2\notag\\
  \leq&\kappa\|\tilde{\ve{x}}_k\|^2-\alpha\nu\|\tilde{\ve{x}}_k\|^2-\alpha^2c_1\|\Lambda^{\T}\tilde{\ve{\lambda}}_k\|^2+\frac{\alpha}{\beta}\|\tilde{\ve{\lambda}}_k\|^2\notag\\
  &+2\frac{\alpha}{\beta}\|\mathcal{B}^2\tilde{\ve{\lambda}}_k\|^2
  -2\frac{\alpha}{\beta}\|\mathcal{B}\tilde{\ve{\lambda}}_k\|^2-\frac{\alpha}{\beta}\|\mathcal{B}\tilde{\ve{y}}_k\|^2+\frac{\alpha}{\beta\gamma}\|\tilde{\ve{y}}_k\|^2\notag\\
  &+\frac{1-\rho^2}{4L_3^2(1+\rho^2)}\kappa_1\|\ve{z}_k-\ve{1}_N\otimes\bar{z}_k\|^2\notag\\
  &+\frac{(1-\rho^2)\alpha}{(1+\rho^2)\gamma}\kappa_2\|\ve{\mu}_k-\ve{1}_N\otimes\bar{\mu}_k\|^2.
\end{align}

For the first two terms on the right-hand side of (\ref{error-x}), it gives
\begin{align}\label{eq:2}
  &\kappa\|\tilde{\ve{x}}_k\|^2-\alpha\nu\|\tilde{\ve{x}}_k\|^2\notag\\
  =&\kappa\|\tilde{\ve{x}}_k\|_{I_d-2\alpha\beta\Lambda^{\T}\Lambda}^2+\kappa\|\tilde{\ve{x}}_k\|_{2\alpha\beta\Lambda^{\T}\Lambda}^2-\alpha\nu\|\tilde{\ve{x}}_k\|^2\notag\\
  \leq&\kappa\|\tilde{\ve{x}}_k\|_{I_d-2\alpha\beta\Lambda^{\T}\Lambda}^2+(2\alpha\beta\kappa\lambda_{\max}(\Lambda^{\T}\Lambda)-\alpha\nu)\|\tilde{\ve{x}}_k\|^2\notag\\
  \leq&\kappa\|\tilde{\ve{x}}_k\|_{I_d-2\alpha\beta\Lambda^{\T}\Lambda}^2,
\end{align}
where the second inequality holds due to the selection of $\alpha$ and $\beta$ in (\ref{eq:stepsize}).

For the four terms with respect to $\tilde{\ve{\lambda}}_k$ on the right-hand side of (\ref{eq:1}), one has
\begin{align}\label{eq:3}
  &-\alpha^2c_1\|\Lambda^{\T}\tilde{\ve{\lambda}}_k\|^2+\frac{\alpha}{\beta}\|\tilde{\ve{\lambda}}_k\|^2
  +2\frac{\alpha}{\beta}\|\mathcal{B}^2\tilde{\ve{\lambda}}_k\|^2
  -2\frac{\alpha}{\beta}\|\mathcal{B}\tilde{\ve{\lambda}}_k\|^2\notag\\
  \leq&\frac{\alpha}{\beta}(1-\alpha\beta c_1\lambda_{\min}(\Lambda\Lambda^{\T}))\|\tilde{\ve{\lambda}}_k\|_{I_{Nm}-\gamma\mathcal{B}^2}^2\notag\\
  &+\frac{\alpha}{\beta}\gamma(1-\alpha\beta c_1\lambda_{\min}(\Lambda\Lambda^{\T}))\|\mathcal{B}\tilde{\ve{\lambda}}_k\|^2\notag\\
  &+2\frac{\alpha}{\beta}(\lambda_{\max}^2(\mathcal{B})-1)\|\mathcal{B}\tilde{\ve{\lambda}}_k\|^2\notag\\
  \leq&\frac{\alpha}{\beta}(1-\alpha\beta c_1\lambda_{\min}(\Lambda\Lambda^{\T}))\|\tilde{\ve{\lambda}}_k\|_{I_{Nm}-\gamma\mathcal{B}^2}^2,
\end{align}
where the second inequality lies on the selection of $\gamma$ in (\ref{eq:stepsize}).

For the two terms about $\tilde{\ve{y}}_k$ on the right-hand side of (\ref{error-x}), it can be derived that
\begin{align}\label{eq:4}
  -\frac{\alpha}{\beta}\|\mathcal{B}\tilde{\ve{y}}_k\|^2+\frac{\alpha}{\beta\gamma}\|\tilde{\ve{y}}_k\|^2\leq&\frac{\alpha}{\beta\gamma}(\|\tilde{\ve{y}}_k\|^2-\gamma\|\mathcal{B}\tilde{\ve{y}}_k\|^2)\notag\\
  \leq&\frac{\alpha}{\beta\gamma}(1-\gamma\underline{\sigma}^2(\mathcal{B}))\|\tilde{\ve{y}}_k\|^2,
\end{align}
in which the second inequality is based on $\|\mathcal{B}\tilde{\ve{y}}_k\|^2\geq \underline{\sigma}^2(\mathcal{B})\|\tilde{\ve{y}}_k\|^2$ under Assumption \ref{ass:network} since $\tilde{\ve{y}}_k$ lies in the range space of $\mathcal{B}$ \cite{Alghunaim2020Linear}.

Inserting (\ref{eq:2}), (\ref{eq:3}) and (\ref{eq:4}) into (\ref{eq:1}) leads to (\ref{thm:rate}). The proof is completed.
\end{proof}

\begin{rem}
Theorem \ref{thm:linearrate} shows the linear convergence result of the proposed Algorithm \ref{alg:primaldual}, and the precise convergence rate is presented with appropriate selection of constant stepsizes $\alpha$, $\beta$ and $\gamma$. We should point out that such stepsizes $\alpha$, $\beta$ and $\gamma$ satisfying (\ref{eq:stepsize}) can actually be taken. In fact, as long as $\alpha$ and $\frac{\alpha}{\gamma}$ are sufficiently small, the first inequality of (\ref{eq:stepsize}) holds, meanwhile, the second inequality can be ensured since $\rho<1$. Further, the third inequality holds by choosing small enough $\gamma$. The fourth inequality can be easily guaranteed with explicit expressions and the last inequality can also be assured as described below. Although in the first, second, third and last inequalities, $\alpha$ and $\gamma$ seem to be coupled together, which is slightly confusing, the above inequalities hold, for example, one can select $\gamma=\sqrt{\alpha}$ and take feasible $\alpha$ correspondingly. On the other hand, it can be seen that the upper bound of the convergence rate depends on not only the objective and constraint functions but also the communication network. Note that $\kappa_1$ and $\kappa_2$ rely on the network spectral gap $1-\rho$ \footnote{smaller network spectral gap $1-\rho$ means weaker network connectivity.} which leads to a high convergence rate of Algorithm \ref{alg:primaldual} with a large $1-\rho$ in view of $\tau$.
\end{rem}

\section{NUMERICAL SIMULATION}\label{sec4}
In this section, we conduct a numerical simulation to demonstrate the proposed algorithm. The experiment is performed using MATLAB R2022a on a laptop with AMD Ryzen 7 5800H CPU @ 3.20 GHz. Consider the following distributed quadratic optimization problem
\begin{equation*}
\begin{aligned}
  &\min_{x_1,\ldots,x_N}~\sum_{i=1}^N \|x_i-a_i\|^2+\|x_i-\frac{1}{N}\sum_{i=1}^Nx_i\|^2\\
  &\text{s.t.}~\sum_{i=1}^N A_ix_i\leq\sum_{i=1}^Nb_i.
\end{aligned}
\end{equation*}
During the implementation, set $N=60$, $m=n_i=5$, $A_i=I_m$, $i\in[N]$. For each $i\in[N]$, the entries of $a_i$ and $b_i$ are randomly chosen from $[1,3]$ and $[1,2]$ with uniform distributions, respectively. The above problem satisfies theoretical Assumptions \ref{ass:function} and \ref{ass:network}. The optimal solution $\ve{x}^*$ is estimated by the CVX toolbox.

To test the convergence performance of our proposed algorithm, first, we simulate Algorithm \ref{alg:primaldual} for different stepsizes, specifically, $\alpha=0.09$ and $\alpha=0.02$, respectively, $\beta=0.4$ and $\gamma=0.1$. The network graph is randomly generated as shown in Fig. \ref{fig_graph}. The relative error trajectories of $\frac{\|\ve{x}_k-\ve{x}^*\|}{\|\ve{x}^*\|}$ are presented in Fig. \ref{fig_error}. It can be seen that our algorithm converges linearly and increasing stepsizes leads to faster convergence speed. On the other hand, we consider different network topologies: the exponential graph, the random graph Fig. \ref{fig_graph} and the ring graph. The adjacency matrix related to the exponential graph is generated according to \cite{ying2021exp} and the one related to the ring graph is set as
$$ w_{ij}=\left\{
\begin{aligned}
0.5,  \quad & if\ i=j, \\
0.25,  \quad & if\ |i-j|=1, \\
0.25,  \quad & if\ (i,j)=(1,n)\ or\ (i,j) = (n,1), \\
0,     \quad & otherwise.
\end{aligned}
\right.
$$
The resulting $\rho$ of the adjacency matrices are $0.7143$, $0.9966$ and $0.9973$, respectively, which characterize the difference in the algebraic connectivity of the associated graphs. The experimental results are shown in Fig. \ref{fig_error_network}. Clearly, increasing connectivity of the network results in faster convergence speed, which is consistent with the obtained theoretical results.

\begin{figure}[htbp]
\centering
\includegraphics[width=7cm]{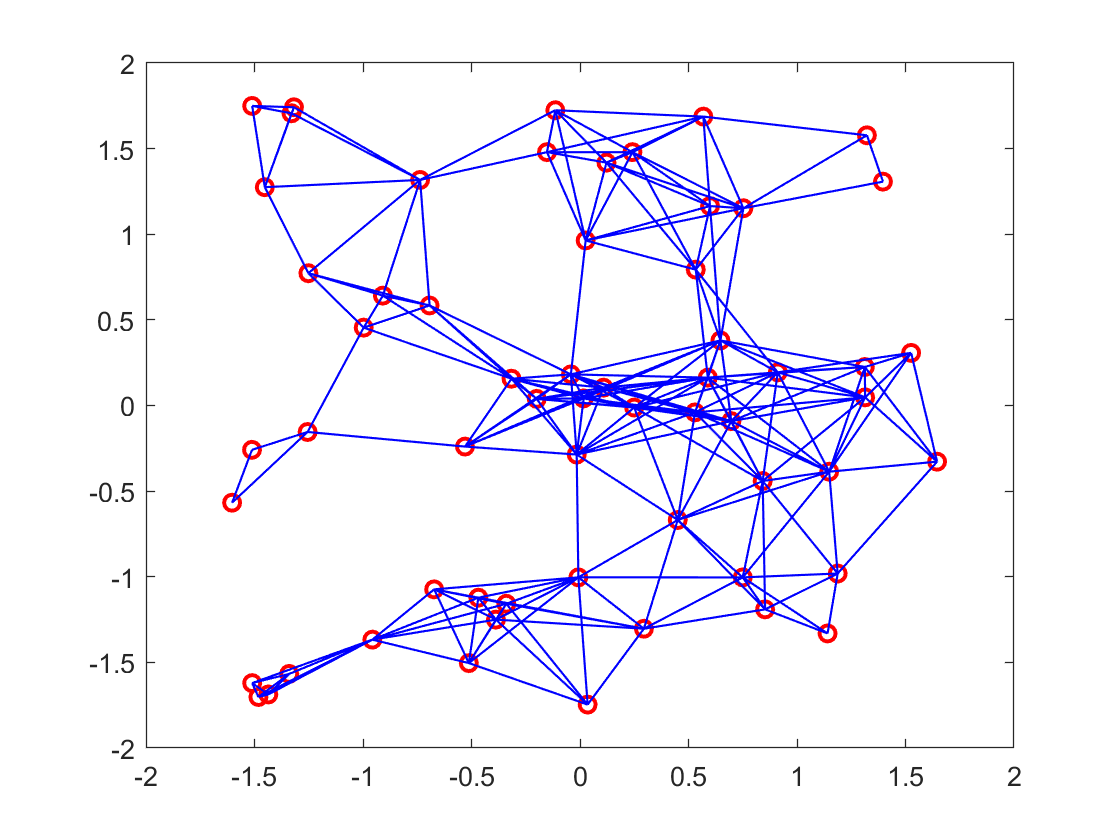}
\caption{Random network graph with 60 nodes.\label{fig_graph}}
\end{figure}
\begin{figure}[htbp]
\centering
\includegraphics[width=7cm]{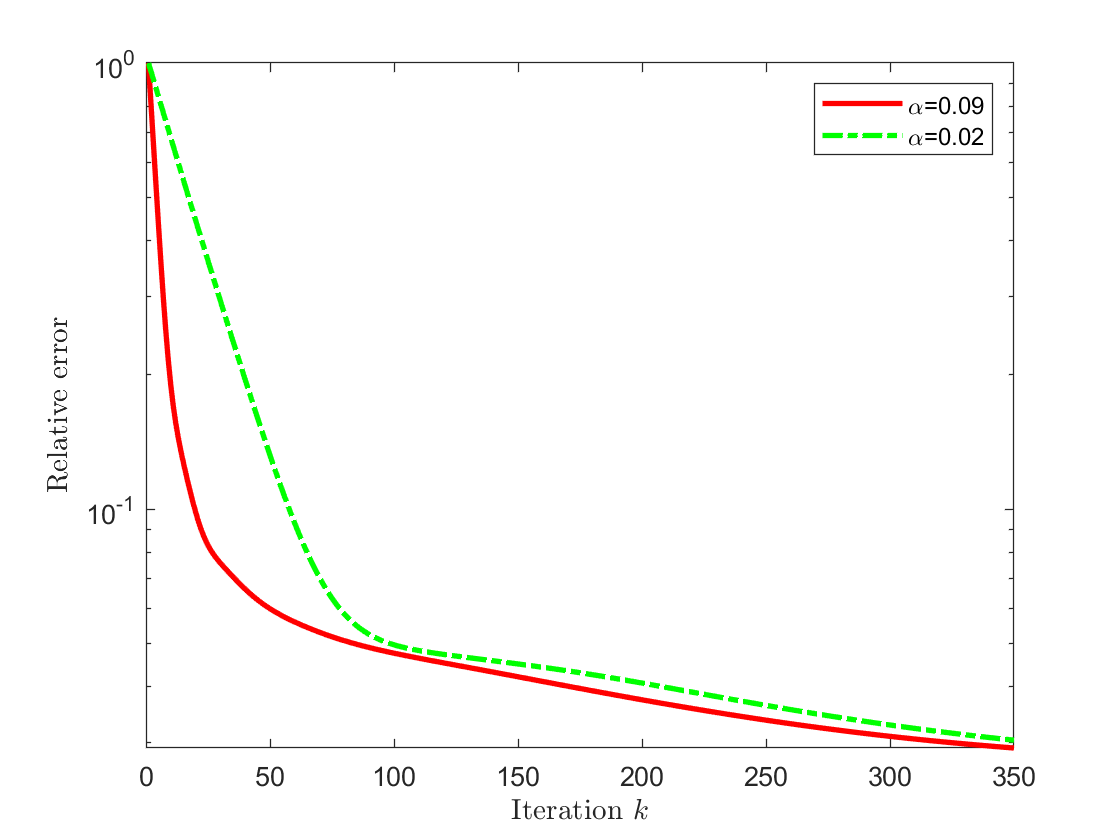}
\caption{The relative error trajectories of $\frac{\|\ve{x}_k-\ve{x}^*\|}{\|\ve{x}^*\|}$ under different stepsizes.\label{fig_error}}
\end{figure}
\begin{figure}[htbp]
\centering
\includegraphics[width=7cm]{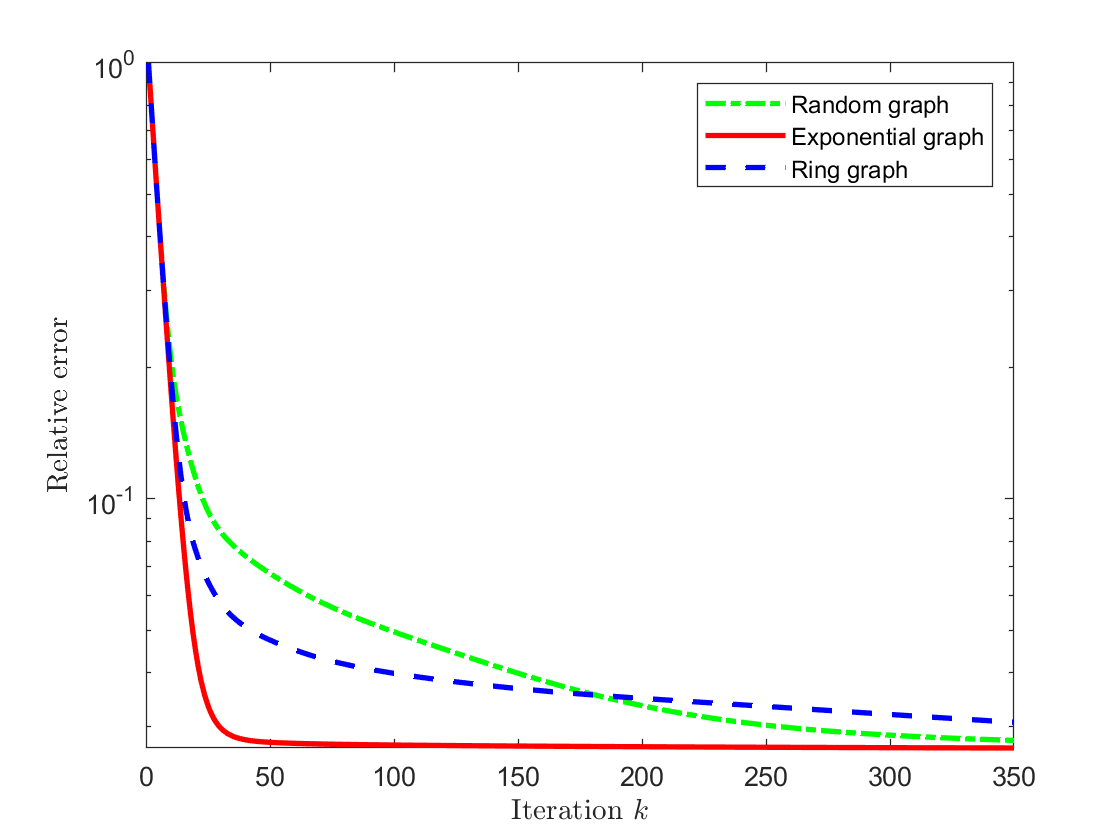}
\caption{The relative error trajectories of $\frac{\|\ve{x}_k-\ve{x}^*\|}{\|\ve{x}^*\|}$ over different network topologies.\label{fig_error_network}}
\end{figure}


\section{CONCLUSION}\label{sec5}
This work focuses on the distributed aggregative optimization with coupled affine inequality constraints. A novel aggregative primal-dual algorithm is proposed to address this problem, which utilizes the gradient tracking technique to handle the aggregate variable and the gradient average and the global optimal dual variable is introduced to deal with the inequality constraints. It has been proven that the proposed algorithm converges at a linear rate. In addition, explicit upper bounds on the stepsizes are provided. Finally, we provide a numerical simulation to support the theoretical results. In the future, it is a prospective direction to take the set constraints into account.

\appendices

\section{Proof of Lemma \ref{lem:average}}
\label{proof:lem:average}

Since $\mathcal{W}$ is doubly stochastic from Assumption \ref{ass:network}, multiplying $\frac{1}{N}\ve{1}_N^{\T}\otimes I_n$ on both sides of (\ref{alg:vec-2}) and (\ref{alg:vec-3}) gives that
\begin{align*}
  \bar{z}_{k+1}=&\bar{z}_k+\frac{1}{N}\sum_{i=1}^{N}h_i(x_{i,k+1})-\frac{1}{N}\sum_{i=1}^{N}h_i(x_{i,k}),\\
  \bar{\mu}_{k+1}=&\bar{\mu}_k+\frac{1}{N}\sum_{i=1}^{N}\nabla_2f_i(x_{i,k+1},z_{i,k+1})\notag\\
  &-\frac{1}{N}\sum_{i=1}^{N}\nabla_2f_i(x_{i,k},z_{i,k}).
\end{align*}

Through a simple recursion, it derives
\begin{align*}
  \bar{z}_k&=\bar{z}_0+\frac{1}{N}\sum_{i=1}^{N}h_i(x_{i,k})-\frac{1}{N}\sum_{i=1}^Nh_i(x_{i,0}),\\
  \bar{\mu}_k&=\bar{\mu}_0+\frac{1}{N}\sum_{i=1}^{N}\nabla_2f_i(x_{i,k},z_{i,k})-\frac{1}{N}\sum_{i=1}^N\nabla_2f_i(x_{i,0},z_{i,0}).
\end{align*}

Considering $z_{i,0}=h_i(x_{i,0})$ and $\mu_{i,0}=\nabla_2f_i(x_{i,0},z_{i,0})$, the assertions (\ref{lem:average-1}) and (\ref{lem:average-2}) hold.

\section{Proof of Lemma \ref{lem:fix}
\label{proof:lem:fix}}

Assume that $(\ve{x}^*,\lambda^*)$ is the KKT point satisfying (\ref{KKT-x}) and (\ref{KKT-dual}). Define $\ve{\lambda}^*=\ve{1}_N\otimes\lambda^*$,  $\ve{z}^*=\ve{1}_N\otimes \phi(\ve{x}^*)$, $\ve{\mu}^*=\ve{1}_N\otimes \frac{1}{N}\sum_{i=1}^N\nabla_2f_i(x_i^*,\phi(\ve{x}^*))$, then under Assumption \ref{ass:network}, $\mathcal{B}^2\ve{\lambda}^*=\ve{0}$, $\ve{z}^*$ and $\ve{\mu}^*$ satisfy (\ref{lem:fixed-pont-2}) and (\ref{lem:fixed-pont-3}), respectively, and (\ref{lem:fixed-pont-1}) holds due to (\ref{KKT-x}).
Define $\ve{v}^*=\ve{1}_N\otimes v^*=\ve{1}_N\otimes(\lambda^*+\frac{\beta}{N}(A\ve{x}^*-b))$, then (\ref{lem:fixed-pont-6}) holds. Note that (\ref{lem:fixed-pont-5}) is equivalent to $\lambda^*=\Pi_{\R^m_+}[v^*]$, that is,  $\frac{\beta}{N}(A\ve{x}^*-b)^{\T}(v-\lambda^*)\leq 0$, $\forall v\in \R^m_+$ by the definition of $\ve{v}^*$. Therefore, it follows from (\ref{KKT-dual}) that (\ref{lem:fixed-pont-5}) holds. Next, we prove that there exists $\ve{y}^*$ satisfying (\ref{lem:fixed-pont-4}). Since $(\ve{1}_N^{\T}\otimes I_m)(\ve{v}^*-\ve{\lambda}^*-\beta(\Lambda \ve{x}^*-\ve{b}))=\ve{0}$, then $\mathcal{B}\ve{y}^*$ is in the null space of $\ve{1}_N^{\T}\otimes I_m$. Then, it is also in the range space of $\mathcal{B}^2$. Therefore, there exists $\ve{y}^*$ in the range space such that (\ref{lem:fixed-pont-4}) holds.

Assume that $(\ve{x}^*,\ve{z}^*,\ve{\mu}^*,\ve{v}^*,\ve{y}^*,\ve{\lambda}^*)$ is a fixed point of (\ref{alg:vec}). It follows from (\ref{lem:fixed-pont-6}) and $\mathcal{B}^2\ve{\lambda}^*=\ve{0}$ that $\ve{v}^*=\ve{1}_N\otimes v^*$, $\ve{\lambda}^*=\ve{1}_N\otimes \lambda^*$ with $v^*,\lambda^*\in\R^m$. It can be obtained from (\ref{lem:fixed-pont-2}) and (\ref{lem:fixed-pont-3}) that $z_i^*=z_j^*$, $\mu_i^*=\mu_j^*$, $\forall i\neq j$, which incorporating with Lemma \ref{lem:average} yields
\begin{align*}
  z_i^*&=\frac{1}{N}\sum_{i=1}^N h_i(x_i^*)=\phi(\ve{x}^*),\notag\\
  \mu_i^*&=\frac{1}{N}\sum_{i=1}^N\nabla_2f_i(x_i^*,\phi(\ve{x}^*)),\ \forall i\in[N].
\end{align*}
Combining (\ref{lem:fixed-pont-2}) and (\ref{lem:fixed-pont-3}) with the above equalities, one has
\begin{equation}\label{eq:fix}
  \ve{z}^*=\ve{1}_N\otimes \phi(\ve{x}^*),\ \ve{\mu}^*=\ve{1}_N\otimes\frac{1}{N}\sum_{i=1}^N\nabla_2f_i(x_i^*,\phi(\ve{x}^*)).
\end{equation}
Substituting (\ref{eq:fix}) into (\ref{lem:fixed-pont-1}) leads to (\ref{KKT-x}).
Subsequently, by (\ref{lem:fixed-pont-4}) and (\ref{lem:fixed-pont-5}), if selecting $\ve{y}^*$ from the range space of $\mathcal{B}$, it can be concluded that $\ve{x}^*$ is the optimal solution to problem (\ref{prob}), and $\lambda^*$ is the optimal global dual variable. This ends the proof.

\section{Proof of Lemma \ref{lem:error-iter}}
\label{proof:lem:error-iter}

Firstly, note that
\begin{equation*}
  2(\tilde{\ve{x}}_k-\tilde{\ve{x}}_{k+1})^{\T}\tilde{\ve{x}}_{k+1}=\|\tilde{\ve{x}}_k\|^2-\|\tilde{\ve{x}}_{k+1}\|^2-\|\ve{x}_{k+1}-\ve{x}_k\|^2,
\end{equation*}
which incorporating with (\ref{alg:vec-error-1}) yields
\begin{align}\label{equation:iter-1}
  \|\tilde{\ve{x}}_{k+1}\|^2=&\|\tilde{\ve{x}}_k\|^2-\|\ve{x}_k-\ve{x}_{k+1}\|^2-2\alpha\tilde{\ve{\lambda}}_k^{\T}\Lambda\tilde{\ve{x}}_{k+1}\notag\\
  &-2\alpha\tilde{\ve{x}}_{k+1}^{\T}\Big(\nabla_1f(\ve{x}_k,\ve{z}_k)+\nabla h(\ve{x}_k)\ve{\mu}_k\notag\\
  &-\nabla_1f(\ve{x}^*,\ve{1}_N\otimes \phi(\ve{x}^*))\notag\\
  &-\nabla h(\ve{x}^*)\Big(\ve{1}_N\otimes\frac{1}{N}\sum_{i=1}^{N}\nabla_2f_i(x_i^*,\phi(\ve{x}^*))\Big)\Big).
\end{align}

Next, we deal with the last term on the right-hand side of (\ref{equation:iter-1}). It can be seen that
\begin{align}\label{equation:iter-2}
  &-2\alpha\tilde{\ve{x}}_{k+1}^{\T}\Big(\nabla_1f(\ve{x}_k,\ve{z}_k)+\nabla h(\ve{x}_k)\ve{\mu}_k-\nabla_1f(\ve{x}^*,\ve{1}_N\otimes \phi(\ve{x}^*))\notag\\
  &-\nabla h(\ve{x}^*)\Big(\ve{1}_N\otimes\frac{1}{N}\sum_{i=1}^{N}\nabla_2f_i(x_i^*,\phi(\ve{x}^*))\Big)\Big)\notag\\
  =&-2\alpha\tilde{\ve{x}}_k^{\T}\Big(\nabla_1f(\ve{x}_k,\ve{z}_k)+\nabla h(\ve{x}_k)\ve{\mu}_k-\nabla_1f(\ve{x}^*,\ve{1}_N\otimes \phi(\ve{x}^*))\notag\\
  &-\nabla h(\ve{x}^*)\Big(\ve{1}_N\otimes\frac{1}{N}\sum_{i=1}^{N}\nabla_2f_i(x_i^*,\phi(\ve{x}^*))\Big)\Big)\notag\\
  &+2\alpha(\ve{x}_k-\ve{x}_{k+1})^{\T}\Big(\nabla_1f(\ve{x}_k,\ve{z}_k)+\nabla h(\ve{x}_k)\ve{\mu}_k\notag\\
  &-\nabla_1f(\ve{x}^*,\ve{1}_N\otimes \phi(\ve{x}^*))\notag\\
  &-\nabla h(\ve{x}^*)\Big(\ve{1}_N\otimes\frac{1}{N}\sum_{i=1}^{N}\nabla_2f_i(x_i^*,\phi(\ve{x}^*))\Big)\Big).
\end{align}

For the second term on the right-hand side of (\ref{equation:iter-2}), one has
\begin{align}\label{equation:iter-3}
  &2\alpha(\ve{x}_k-\ve{x}_{k+1})^{\T}\Big(\nabla_1f(\ve{x}_k,\ve{z}_k)+\nabla h(\ve{x}_k)\ve{\mu}_k\notag\\
  &-\nabla_1f(\ve{x}^*,\ve{1}_N\otimes \phi(\ve{x}^*))\notag\\
  &-\nabla h(\ve{x}^*)\Big(\ve{1}_N\otimes\frac{1}{N}\sum_{i=1}^{N}\nabla_2f_i(x_i^*,\phi(\ve{x}^*))\Big)\Big)\notag\\
  =&-\Big\|\ve{x}_k-\ve{x}_{k+1}-\alpha\Big(\nabla_1f(\ve{x}_k,\ve{z}_k)+\nabla h(\ve{x}_k)\ve{\mu}_k\notag\\
  &-\nabla_1f(\ve{x}^*,\ve{1}_N\otimes \phi(\ve{x}^*))\notag\\
  &-\nabla h(\ve{x}^*)\Big(\ve{1}_N\otimes\frac{1}{N}\sum_{i=1}^{N}\nabla_2f_i(x_i^*,\phi(\ve{x}^*))\Big)\Big)\Big\|^2\notag\\
  &+\|\ve{x}_k-\ve{x}_{k+1}\|^2+\alpha^2\Big\|\nabla_1f(\ve{x}_k,\ve{z}_k)+\nabla h(\ve{x}_k)\ve{\mu}_k\notag\\
  &-\nabla_1f(\ve{x}^*,\ve{1}_N\otimes \phi(\ve{x}^*))\notag\\
  &-\nabla h(\ve{x}^*)\Big(\ve{1}_N\otimes\frac{1}{N}\sum_{i=1}^{N}\nabla_2f_i(x_i^*,\phi(\ve{x}^*))\Big)\Big\|^2\notag\\
  =&-\alpha^2\|\Lambda^{\T}\tilde{\ve{\lambda}}_k\|^2+\|\ve{x}_k-\ve{x}_{k+1}\|^2+\alpha^2\Big\|\nabla_1f(\ve{x}_k,\ve{z}_k)\notag\\
  &+\nabla h(\ve{x}_k)\ve{\mu}_k\notag-\nabla_1f(\ve{x}^*,\ve{1}_N\otimes \phi(\ve{x}^*))\notag\\
  &-\nabla h(\ve{x}^*)\Big(\ve{1}_N\otimes\frac{1}{N}\sum_{i=1}^{N}\nabla_2f_i(x_i^*,\phi(\ve{x}^*))\Big)\Big\|^2,
\end{align}
where iteration (\ref{alg:vec-error-1}) has been used to obtain the last equality.

Adding and subtracting the term $\nabla h(\ve{x}_k)(\ve{1}_N\otimes\bar{\mu}_k)$, and utilizing $\|a+b\|^2\leq2\|a\|^2+2\|b\|^2$ for any $a, b\in \R^d$, it can be obtained that
\begin{align*}
  &\Big\|\nabla_1f(\ve{x}_k,\ve{z}_k)+\nabla h(\ve{x}_k)\ve{\mu}_k-\nabla_1f(\ve{x}^*,\ve{1}_N\otimes \phi(\ve{x}^*))\notag\\
  &-\nabla h(\ve{x}^*)\Big(\ve{1}_N\otimes\frac{1}{N}\sum_{i=1}^{N}\nabla_2f_i(x_i^*,\phi(\ve{x}^*))\Big)\Big\|^2\notag\\
  \leq&2\Big\|\nabla_1f(\ve{x}_k,\ve{z}_k)+\nabla h(\ve{x}_k)\ve{1}_N\otimes\bar{\mu}_k-\nabla_1f(\ve{x}^*,\ve{1}_N\otimes \phi(\ve{x}^*))\notag\\
  &-\nabla h(\ve{x}^*)\Big(\ve{1}_N\otimes\frac{1}{N}\sum_{i=1}^{N}\nabla_2f_i(x_i^*,\phi(\ve{x}^*))\Big)\Big\|^2\notag\\
  &+2\|\nabla h(\ve{x}_k)(\ve{\mu}_k-\ve{1}_N\otimes\bar{\mu}_k)\|^2\notag\\
  \leq&4L_1^2(\|\tilde{\ve{x}}_k\|^2+\|\ve{z}_k-\ve{1}_N\otimes\phi(\ve{x}^*)\|^2))+2L_3^2\|\ve{\mu}_k-\ve{1}_N\otimes\bar{\mu}_k\|^2\notag\\
  \leq&4L_1^2\|\tilde{\ve{x}}_k\|^2+8L_1^2\|\ve{z}_k-\ve{1}_N\otimes\bar{z}_k\|^2\notag\\
  &+8L_1^2\|\ve{1}_N\otimes(\bar{z}_k-\phi(\ve{x}^*))\|^2+2L_3^2\|\ve{\mu}_k-\ve{1}_N\otimes\bar{\mu}_k\|^2,
\end{align*}
where the relation $\bar{\mu}_k=\frac{1}{N}\sum_{i=1}^N\nabla_2f_i(x_{i,k},z_{i,k})$, Assumption \ref{ass:function}-ii) and \ref{ass:function}-iv) have been employed in the second inequality. For the terms $\|\ve{1}_N\otimes(\bar{z}_k-\phi(\ve{x}^*))\|^2$ on the right-hand side of the above formula, in view of (\ref{lem:average-1}), it holds that
\begin{align*}
  \|\ve{1}_N\otimes(\bar{z}_k-\phi(\ve{x}^*))\|^2&=N\Big\|\frac{1}{N}\sum_{i=1}^N(h_i(x_{i,k})-h_i(x_i^*))\Big\|^2\notag\\
  &\leq\frac{1}{N}\Big(\sum_{i=1}^N\|h_i(x_{i,k})-h_i(x_i^*)\|\Big)^2\notag\\
  &\leq\frac{1}{N}\Big(\sum_{i=1}^NL_3\|x_{i,k}-x_i^*\|\Big)^2\notag\\
  &\leq L_3^2\sum_{i=1}^N\|x_{i,k}-x_i^*\|^2\notag\\
  &=L_3^2\|\tilde{\ve{x}}_k\|^2,
\end{align*}
where Assumption \ref{ass:function}-iv) has been exploited in the second inequality, and the C-R inequality $\|\sum_{i=1}^pa_i\|^2\leq p\sum_{i=1}^p\|a_i\|^2$ for any $a_i\in \R^d$, $\forall i\in[p]$ has been leveraged in the first and last inequalities. Hence, combining the above two inequalities, it can be derived that
\begin{align}\label{equation:iter-4}
  &\Big \|\nabla_1f(\ve{x}_k,\ve{z}_k)+\nabla h(\ve{x}_k)\ve{\mu}_k-\nabla_1f(\ve{x}^*,\ve{1}_N\otimes \phi(\ve{x}^*))\notag\\
  &-\nabla h(\ve{x}^*)\Big(\ve{1}_N\otimes\frac{1}{N}\sum_{i=1}^{N}\nabla_2f_i(x_i^*,\phi(\ve{x}^*))\Big)\Big\|^2\notag\\
  \leq&4L_1^2(1+2L_3^2)\|\tilde{\ve{x}}_k\|^2+8L_1^2\|\ve{z}_k-\ve{1}_N\otimes\bar{\ve{z}}_k\|^2\notag\\
  &+2L_3^2\|\ve{\mu}_k-\ve{1}_N\otimes\bar{\mu}_k\|^2.
\end{align}

Proceeding in a similar way to (\ref{equation:iter-4}), one has
\begin{align}\label{equation:iter-5}
  &\Big\|\nabla_1f(\ve{x}_k,\ve{z}_k)+\nabla h(\ve{x}_k)\ve{\mu}_k-\nabla_1f(\ve{x}_k,\ve{1}_N\otimes\phi(\ve{x}_k))\notag\\
  &-\nabla h(\ve{x}_k)\Big(\ve{1}_N\otimes\frac{1}{N}\nabla_2f_i(x_{i,k},\phi(\ve{x}_k))\Big)\Big\|^2\notag\\
  \leq&2L_1^2\|\ve{z}_k-\ve{1}_N\otimes\bar{z}_k\|^2+2L_3^2\|\ve{\mu}_k-\ve{1}_N\otimes\bar{\mu}_k\|^2.
\end{align}
For the first term on the right-hand side of (\ref{equation:iter-2}), note that
\begin{align}\label{equation:iter-6}
  &-2\alpha\tilde{\ve{x}}_k^{\T}\Big(\nabla_1f(\ve{x}_k,\ve{z}_k)+\nabla h(\ve{x}_k)\ve{\mu}_k-\nabla_1f(\ve{x}^*,\ve{1}_N\otimes \phi(\ve{x}^*))\notag\\
  &-\nabla h(\ve{x}^*)\Big(\ve{1}_N\otimes\frac{1}{N}\sum_{i=1}^{N}\nabla_2f_i(x_i^*,\phi(\ve{x}^*))\Big)\Big)\notag\\
  =&-2\alpha\tilde{\ve{x}}_k^{\T}\Big(\nabla_1f(\ve{x}_k,\ve{z}_k)+\nabla h(\ve{x}_k)\ve{\mu}_k-\nabla_1f(\ve{x}_k,\ve{1}_N\otimes\phi(\ve{x}_k))\notag\\
  &-\nabla h(\ve{x}_k)\Big(\ve{1}_N\otimes\frac{1}{N}\nabla_2f_i(x_{i,k},\phi(\ve{x}_k))\Big)\Big)\notag\\
  &-2\alpha\tilde{\ve{x}}_k^{\T}\Big(\nabla_1f(\ve{x}_k,\ve{1}_N\otimes\phi(\ve{x}_k))-\nabla_1f(\ve{x}^*,\ve{1}_N\otimes \phi(\ve{x}^*))\notag\\
  &+\nabla h(\ve{x}_k)\Big(\ve{1}_N\otimes\frac{1}{N}\nabla_2f_i(x_{i,k},\phi(\ve{x}_k))\Big)\notag\\
  &-\nabla h(\ve{x}^*)\Big(\ve{1}_N\otimes\frac{1}{N}\sum_{i=1}^{N}\nabla_2f_i(x_i^*,\phi(\ve{x}^*))\Big)\Big)\notag\\
  \leq&\frac{\alpha\nu}{2}\|\tilde{\ve{x}}_k\|^2+\frac{2\alpha}{\nu}(2L_1^2\|\ve{z}_k-\ve{1}_N\otimes\bar{z}_k\|^2\notag\\
  &+2L_3^2\|\ve{\mu}_k-\ve{1}_N\otimes\bar{\mu}_k\|^2)-2\nu\alpha\|\tilde{\ve{x}}_k\|^2,
\end{align}
where the Young's inequality $a^{\T}b\leq\frac{\iota}{2}\|a\|^2+\frac{1}{2\iota}\|b\|^2$ for $\iota=\frac{\nu}{2}$, Assumption \ref{ass:function}-i) and (\ref{equation:iter-5}) have been used in the last inequality.

Combining (\ref{equation:iter-1}), (\ref{equation:iter-2}), (\ref{equation:iter-3}), (\ref{equation:iter-4}) and (\ref{equation:iter-6}), the assertion (\ref{error-x}) holds.

Utilizing the same analytical approach to \cite[Lemma 2]{meng2022linear}, the assertion (\ref{error-dual}) holds.

From (\ref{alg:vec-error-1}), it follows from (\ref{equation:iter-4}) that
\begin{align}\label{equation:iter-11}
  &\|\ve{x}_{k+1}-\ve{x}_k\|^2\notag\\
  \leq&2\alpha^2\Big\|\nabla_1f(\ve{x}_k,\ve{z}_k)+\nabla h(\ve{x}_k)\ve{\mu}_k-\nabla_1f(\ve{x}^*,\ve{1}_N\otimes \phi(\ve{x}^*))\notag\\
  &-\nabla h(\ve{x}^*)\Big(\ve{1}_N\otimes\frac{1}{N}\sum_{i=1}^{N}\nabla_2f_i(x_i^*,\phi(\ve{x}^*))\Big)\Big\|^2+2\alpha^2\|\Lambda^{\T}\tilde{\ve{\lambda}}_k\|^2\notag\\
  \leq&8L_1^2(1+2L_3^2)\alpha^2\|\tilde{\ve{x}}_k\|^2+16L_1^2\alpha^2\|\ve{z}_k-\ve{1}_N\otimes\bar{\ve{z}}_k\|^2\notag\\
  &+4L_3^2\alpha^2\|\ve{\mu}_k-\ve{1}_N\otimes\bar{\mu}_k\|^2+2\alpha^2\|\Lambda^{\T}\tilde{\ve{\lambda}}_k\|^2.
\end{align}
In view of (\ref{alg:vec-2}), it holds that
\begin{align*}
  &\|\ve{z}_{k+1}-\ve{1}_N\otimes\bar{z}_{k+1}\|^2\notag\\
  =&\|(\mathcal{W}\otimes I_n)\ve{z}_k-(\frac{1}{N}\ve{1}_N\ve{1}_N^{\T}\otimes I_n)\ve{z}_k\notag\\
  &+(I_{Nn}-\frac{1}{N}\ve{1}_N\ve{1}_N^{\T}\otimes I_n)(h(\ve{x}_{k+1})-h(\ve{x}_k))\|^2\notag\\
  \leq&\frac{1+\rho^2}{2}\|(\mathcal{W}\otimes I_n)\ve{z}_k-(\frac{1}{N}\ve{1}_N\ve{1}_N^{\T}\otimes I_n)\ve{z}_k\|^2\notag\\
  &+\frac{1+\rho^2}{1-\rho^2}\|(I_{Nn}-\frac{1}{N}\ve{1}_N\ve{1}_N^{\T}\otimes I_n)(h(\ve{x}_{k+1})-h(\ve{x}_k))\|^2\notag\\
  \leq&\frac{1+\rho^2}{2}\|\ve{z}_k-\ve{1}_N\otimes\bar{z}_k\|^2+\frac{1+\rho^2}{1-\rho^2}L_3^2\|\ve{x}_{k+1}-\ve{x}_k\|^2,
\end{align*}
where Lemma \ref{lem:network}-i) has been exploited in the first equality, the first inequality follows from $\|a+b\|^2\leq(1+\iota)\|a\|^2+(1+\frac{1}{\iota})\|b\|^2$ for $\iota=\frac{1-\rho^2}{2\rho^2}$, and the last inequality holds based on Lemma \ref{lem:network}-iv), Assumption \ref{ass:function}-iv) and the fact $\|I_{Nn}-\frac{1}{N}\ve{1}_N\ve{1}_N^{\T}\otimes I_n\|=1$. Combining the above inequality and (\ref{equation:iter-11}) yields (\ref{error-aggre}).

In light of $\|\mathcal{W}-I_N\|\leq 2$ from Lemma \ref{lem:network}-iii) and Assumption \ref{ass:function}-iv), it can be derived from (\ref{alg:vec-2}) that
\begin{align}\label{equation:iter-12}
  &\|\ve{z}_{k+1}-\ve{z}_k\|^2\notag\\
  =&\|(\mathcal{W}\otimes I_n-I_N\otimes I_n)(\ve{z}_k-\ve{1}_N\otimes\bar{z}_k)+h(\ve{x}_{k+1})-h(\ve{x}_k)\|^2\notag\\
  \leq&2\|\mathcal{W}-I_N\|^2\|\ve{z}_k-\ve{1}_N\otimes\bar{z}_k\|^2+2L_3^2\|\ve{x}_{k+1}-\ve{x}_k\|^2\notag\\
  \leq&8\|\ve{z}_k-\ve{1}_N\otimes\bar{z}_k\|^2+2L_3^2\|\ve{x}_{k+1}-\ve{x}_k\|^2.
\end{align}
Similarly, it follows from (\ref{alg:vec-3}) that
\begin{align*}
  &\|\ve{\mu}_{k+1}-\ve{1}_N\otimes\bar{\mu}_{k+1}\|^2\notag\\
  \leq&\frac{1+\rho^2}{2}\|\ve{\mu}_k-\ve{1}_N\otimes\bar{\mu}_k\|^2\notag\\
  &+\frac{1+\rho^2}{1-\rho^2}\|\nabla_2f(\ve{x}_{k+1},\ve{z}_{k+1})-\nabla_2f(\ve{x}_k,\ve{z}_k)\|^2\notag\\
  \leq&\frac{1+\rho^2}{2}\|\ve{\mu}_k-\ve{1}_N\otimes\bar{\mu}_k\|^2\notag\\
  &+2\frac{1+\rho^2}{1-\rho^2}L_2^2(\|\ve{x}_{k+1}-\ve{x}_k\|^2+\|\ve{z}_{k+1}-\ve{z}_k\|^2),
\end{align*}
which incorporating with (\ref{equation:iter-11}) and (\ref{equation:iter-12}) yields relation (\ref{error-nabla2}).


\begin{IEEEbiography}[{\includegraphics[width=1in,height=1.25in,clip,keepaspectratio]{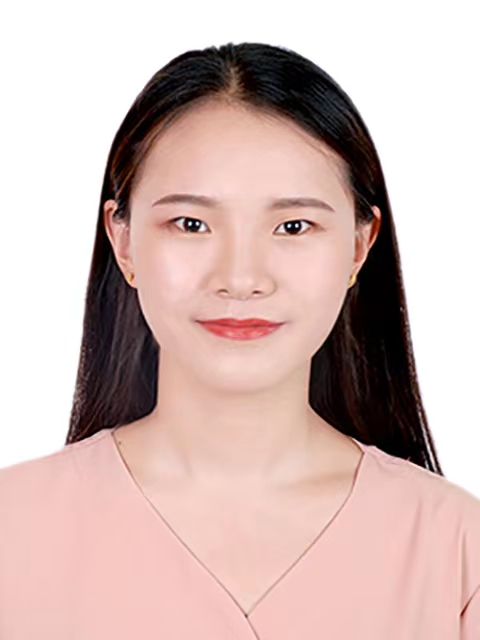}}]
{Kaixin Du}
received the B.S. degree in mathematics and applied mathematics from Henan University, Kaifeng, China in 2019, and received the M.S. degree in operations research and cybernetics from Dalian University of Technology, Dalian, China in 2022. She is currently pursuing the Ph.D. degree in intelligent science and technology from Shanghai Research Institute for Intelligent Autonomous Systems, Tongji University, Shanghai, China.

Her current research interests include stochastic optimization and distributed optimization.

\end{IEEEbiography}
\begin{IEEEbiography}[{\includegraphics[width=1in,height=1.25in,clip,keepaspectratio]{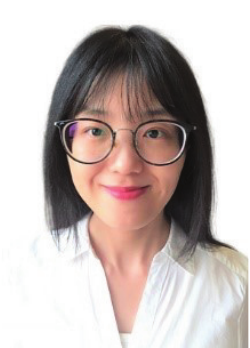}}]
{Min Meng}
received the B.S. and Ph.D. degrees from Shandong University, China, in 2010 and 2015, respectively. She had a position as a Research Associate in Department of Mechanical Engineering, The University of Hong Kong, Hong Kong, China, from April to October in 2014, from July to September in 2016, and from January to March in 2017. From July 2015 to June 2016, she was a Research Associate in Department of Biomedical Engineering, City University of Hong Kong, Hong Kong, China. From July 2017 to September 2020, she worked as a Research Fellow in the School of Electrical and Electronic Engineering, Nanyang Technological University, Singapore. In September 2020, she joined the Department of Control Science and Engineering, Tongji University, and Shanghai Research Institute for Intelligent Autonomous Systems, Tongji University, Shanghai, China, where she is a professor now.

Her research interests include multi-agent systems, distributed games and optimization, Boolean networks, distributed secure control and estimation, especially for large-scale networks, cooperative control of multiple agents, as well as applications to autonomous vehicles and robotics, genetic regulatory networks, and wireless sensor networks, etc.
\end{IEEEbiography}

\end{document}